\documentclass[12pt]{article}
\usepackage[numbers,sort&compress]{natbib}
\usepackage{amssymb,amsmath,mathrsfs,amsfonts,amsthm}
\usepackage{enumerate}
\usepackage{paralist}
\usepackage{color}
\usepackage{epsfig}
\usepackage{graphicx}
\usepackage{setspace}
\usepackage{appendix}

\newtheorem{dl}{Theorem}[section]
\newtheorem{yl}{Lemma}[section]

\newtheorem{zy}{Remark}[section]

\newtheorem{dy}{Definition}[section]

\newtheorem{mytheorem}{Theorem}
\allowdisplaybreaks

\newcommand{\rmnum}[1]{\romannumeral #1}
\newcommand{\Rmnum}[1]{\expandafter\@slowromancap\romannumeral #1@}

\begin{document}

\let\oldsection\section
\renewcommand\section{\setcounter{equation}{0}\oldsection}
\renewcommand\thesection{\arabic{section}}
\renewcommand\theequation{\thesection.\arabic{equation}}

\title{\textbf{Refined estimates of the propagation speed in porous medium equation
of combustion type{%
}}}
\renewcommand{\thefootnote}{\fnsymbol{footnote}}

\author{ Suying Liu\footnote{School of Mathematics and Statistics, Northwestern Polytechnical University, Shaanxi, Xi'an, 710129, China;
 suyingliu@nwpu.edu.cn}\ \   and\ \   Fan Wu\footnote{Chern Institute of Mathematics, Nankai University, Tianjin 300071, China; 1120220015@mail.nankai
 .edu.cn}
}
\date{}
\maketitle
\begin{abstract}
We are concerned with the Cauchy problem $u_{t}=(u^{m})_{xx}+f(u)$, where the nonliearity $f(u)$ is of combustion type and the initial data is compactly supported. In \cite{lou2024convergence}, among other things, the authors prove that by considering a multiple of a given initial data, there is a critical value such that  the corresponding  transition solution  spreads at the asymptotic speed $2y_{0}\sqrt{t}[1+o(1)]\ \text{as} \ t\rightarrow\infty$, while the lower order term $o(1)$ remains unknown. In this paper, for a family of functions of combustion type, we refine the estimates of the asymptotic speed of the transition solution and provide a precise characterization of the lower  order term $o(1)$.  Our result also reveals that  there is no unified characterization of the lower order term for general combustion type functions $f$.

	\textbf{Key words:} Free boundary problem; Porous medium equation; Combustion; Propagation speed
\end{abstract}

\section{Introduction}
\renewcommand{\themytheorem}{\Alph{mytheorem}}
In this paper, we study the porous medium equation with reactions:
\begin{equation}
\left\{
\begin{split}
&u_{t}=(u^{m})_{xx}+f(u),\qquad \, x\in \mathbb{R}, \ t>0,\\
&u(x,0)=u_{0}(x),\ \ \ \quad\qquad x\in \mathbb{R},\\
\end{split}\label{1.1}
\right.
\end{equation}
where $m>1$ is a constant, $u_0$ has compact support and $f$ satisfies
\begin{align}
f(x)\in C^{2}([0,\infty))\ \text{with Lipschitz number}\ K, \ f(0)=0\ \text{and}\ f(x)<0\ \text{for}\ x>1.\label{ff}
\end{align}

In recent decades, problem (\ref{1.1}) has been extensively studied in the literature. It has been proved in \cite{de1991travelling,aronson1982stabilization,sacks1983initial,vazquez2007porous} that under condition (\ref{ff}), (\ref{1.1}) admits a unique globally defined solution $u(x,t)$. Moreover, the solution has compact support under the above conditions, and thus \eqref{1.1} can be regarded as a free boundary problem. For three typical types of $f$ as  monostable case, bistable case and combustion case, the existence of traveling wave solutions, the asymptotic behaviors of the solutions of (\ref{1.1}) as well as the propagation speed of the free boundary with compactly supported initial data were studied in \cite{aronson1982stabilization,aronson1980density,gilding2004travelling,sherratt2010form,du2020logarithmic,garriz2020propagation,de1991travelling,lou2024convergence,du2015nonlinear} and other works.

The recent progress related to problem \eqref{1.1} comes from the paper by Lou and Zhou \cite{lou2024convergence}, they provided a complete characterization for the asymptotic behaviors of the solutions of (\ref{1.1}) with monostable, bistable and combustion types of reactions.
%The authors identified the stationary solutions and established a general convergence theorem: any nonnegative, bounded global solution converges as $t\rightarrow\infty$ to a stationary solution. Based on this general result, they further proved the hair-trigger effect for the monostable porous medium equation (PME), as well as a spreading每transition每vanishing trichotomy governing the asymptotic behavior of solutions to bistable or combustion-type reaction porous medium equations (RPMEs).
Among other things, when $f$ is of combustion type
\begin{equation}
\begin{split}
f(x)=0\  \text{in}\ [0,\theta], \quad f(x)>0\ \text{in}\ (\theta,1), \quad f(x)\leq0\ \text{in}\ [1,\infty), \quad f'(x)>0\ \text{for}\ x\in(\theta,\theta+\sigma),
\end{split}\label{1.4}
\end{equation}
and $u_0=\lambda \psi$ with $\psi$ being chosen from some appropriate set, they proved a trichotomy result: there exists a sharp value $\lambda^{*}>0$ such that
\begin{itemize}
\item if $\lambda>\lambda^{*}$, spreading happens: $\lim_{t\rightarrow\infty}u=1$ locally uniformly as $t\rightarrow\infty$;
\item if $\lambda=\lambda^{*}$, transition happens: $\lim_{t\rightarrow\infty}u=\theta$ locally uniformly as $t\rightarrow\infty$;
\item if $\lambda<\lambda^{*}$, vanishing happens: $\lim_{t\rightarrow\infty}u=0$ locally uniformly as $t\rightarrow\infty$,
\end{itemize}
where $\theta\in(0,1)$ is the socalled ignition temperature of $f$. Moreover, for the critical case $\lambda=\lambda^*$, by denoting the compact support of the solution as spt$[u(\cdot,t)]:=[l(t),r(t)]$ for all $t>0$ (whose existence is well known) they further demonstrated that both $-l(t)$ and $r(t)$ are non-decreasing and satisfy $-l(t),\ r(t)=2y_{0}\sqrt{t}[1+o(1)]\ \text{as} \ t\rightarrow\infty$ (see Theorems \ref{A} and \ref{B} in the next section for details).

The main purpose of this paper is to establish a more refined asymptotic characterization of the free boundary $-l(t),\ r(t)$ on the basis of the aforementioned results for a special type of combustion type source terms $f$. Specifically, we assume that $f$ satisfies the following conditions:
and is further characterized by
\begin{equation}
f(x)=(x-\theta)^{p}\quad \text{in}\ [\theta,\theta+\sigma],\label{1.6}
\end{equation}
for some constants $p>1$ and $\sigma>0$.  The main result of this paper is as follows.
\begin{dl}
Assume that $f\in C^1([0,\infty))\cup C^2([0,\theta)\cup(\theta,\infty))$ is of combustion type satisfying $(\ref{1.4})$ and $(\ref{1.6})$. Let $u(x,t)$ be the solution to $(\ref{1.1})$ with compact support spt$[u(\cdot,t)]:=[l(t),r(t)]$ for which the transition happens. Then the following assertions hold.
\begin{itemize}
\item[\rm(\rmnum{1})] If $p>3$, we have

$$2y_{0}\sqrt{t}+ht^{\frac{1}{2}-\frac{1}{p-1}}\leq -l(t),\ r(t)\leq 2y_{0}\sqrt{t}+Ht^{\frac{1}{2}-\frac{1}{p+1}},\ \ \text{as} \ t\rightarrow\infty,  $$
where $h$, $H$ and $y_{0}$ are positive constants independent of $t$.
\item[\rm(\rmnum{2})] If $1<p\leq3$, we have

$$2y_{0}\sqrt{t}-h\leq -l(t),\ r(t)\leq 2y_{0}\sqrt{t}+Ht^{\frac{1}{2}-\frac{1}{p+1}},\ \ \text{as} \ t\rightarrow\infty,  $$

  where $h$, $H$ and $y_{0}$ are positive constants independent of $t$.
\end{itemize}\label{dl1.1}
\end{dl}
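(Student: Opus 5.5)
The plan is to work in self-similar variables around the limit profile. Theorems A and B give $u\to\theta$ locally uniformly, and the support grows like $2y_0\sqrt t$. The mechanism should be the following. Behind the front, $u$ is slightly above $\theta$. In the region $\{u\le\theta\}$ the equation is the pure porous medium equation $u_t=(u^m)_{xx}$, so near the free boundary $u$ behaves like a self-similar PME profile $\Phi(x/\sqrt t)$ whose support edge is $2y_0$. The parameter $y_0$ is determined by $\Phi$ reaching the level $\theta$ in the "interior". I will set $\xi=x/(2\sqrt t)$, $\tau=\ln t$, and write $u(x,t)=w(\xi,\tau)$. Then
$$w_\tau=\tfrac14 (w^m)_{\xi\xi}+\tfrac12\xi w_\xi+t f(w),$$
and the reaction term is $e^{\tau}(w-\theta)_+^p$ near $w=\theta$. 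Since transition occurs, the excess $v:=u-\theta$ must decay. The first step is to find the decay rate of $\sup v$.

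Balancing the reaction against the inflow, $v_t\approx v^p$ cannot be balanced as a pure ODE because that would blow up. Instead the interior excess is slaved to the outgoing flux. Heuristically, a plateau of height $\theta+\epsilon(t)$ of width $\sim\sqrt t$ loses mass at rate $\sim\epsilon\cdot t^{-1/2}$ through the fronts and gains $\epsilon^p$ from the reaction. This balance gives $\epsilon\sim t^{-1/(2(p-1))}$ as an upper-type scale. The matching of the edge layer with the free boundary then shifts the front position by
$$\delta r\sim \sqrt t\cdot\epsilon^{\alpha}$$
for an appropriate $\alpha$. The exponents in the theorem, $t^{1/2-1/(p\pm1)}$, correspond to $\epsilon\sim t^{-1/(p\pm1)}$. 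So I would aim to prove two-sided bounds of the form
$$c\,t^{-1/(p-1)}\lesssim \sup(u-\theta)\lesssim C\,t^{-1/(p+1)},$$
together with the fact that the front shift is linear in $\epsilon$ times $\sqrt t$. The lower bound on $v$ should come from the fact that $u$ is not spreading but transition requires $u\ge\theta$ somewhere. Comparing with the ODE $\dot\epsilon=\epsilon^p$ run backward gives decay no faster than $t^{-1/(p-1)}$.

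Concretely, the key steps in order are these.
\begin{enumerate}
\item Construct, for the outer region $\{u<\theta\}$, sub- and supersolutions of the PME of the form $\Phi_\pm\big((x\mp \eta_\pm(t))/\sqrt{t+T}\big)$. Here $\Phi$ is the Barenblatt-type self-similar profile with $\Phi(y_0)=\theta$ matching (the profile used to identify $y_0$ in Theorem B), and $\eta_\pm(t)$ are shift functions to be chosen.
\item Construct inner comparison functions $\theta+\epsilon_\pm(t)$ (possibly with a spatial cutoff) solving
$$\epsilon_\pm'=\epsilon_\pm^p\mp(\text{flux loss}),$$
glued to the outer profiles at the level $\theta$. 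The gluing condition (continuity of $u^m$ and of the flux $(u^m)_x$) forces
$$\eta_\pm'\sim \epsilon_\pm\, t^{-1/2}\cdot\sqrt t\,/\,\sqrt t .$$
Integrating this gives the shifts $Ht^{1/2-1/(p+1)}$ and $ht^{1/2-1/(p-1)}$.
\item Apply the comparison principle for the free boundary problem, with initial ordering arranged at a large time $T$ using Theorem B's convergence. This transfers the ordering to the supports, which yields the two inequalities.
\end{enumerate}

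For the lower bound, the term $ht^{1/2-1/(p-1)}$ grows only when $p>3$. When $p\le3$ the correction is bounded or decaying, so one simply uses an unshifted subsolution $\Phi((x+h')/\sqrt{t+T})$. This gives $2y_0\sqrt t-h$ and explains the case split in (ii).

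The main obstacle is step 2, the gluing near the level set $u=\theta$. The reaction $(u-\theta)^p$ is only $C^1$ there when $p$ is close to $1$, and the sub/supersolution inequalities must hold across the interface where $f$ switches on. I would first try to avoid an explicit interface. For this I would define the inner supersolution as $\theta+\epsilon(t)\varphi(x/\sqrt t)$ with $\varphi$ a positive eigen-type profile of the linearized self-similar operator, and verify the differential inequality directly using $f(u)\le (u-\theta)_+^p$ near $\theta$. Choosing $\epsilon(t)=Mt^{-1/(p+1)}$ for the supersolution and $\epsilon(t)=mt^{-1/(p-1)}$ for the subsolution, the leftover terms have the right sign for $t$ large. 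I expect the exponent mismatch ($p+1$ versus $p-1$) to come exactly from the unavoidable loss in this verification.
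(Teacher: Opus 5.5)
Your proposal has genuine gaps at both ends, and the scaling heuristics do not close them. Your mass balance actually gives $t^{-1/(2(p-1))}$; the exponents $t^{-1/(p\pm1)}$ are then simply read off from the statement.

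\emph{Upper bound.} The supersolution $\theta+\epsilon(t)\varphi(y)$, $y=x/\sqrt t$, $\epsilon=Mt^{-1/(p+1)}$, does not work. To leading order the required inequality is $-\tfrac{1}{p+1}\varphi-\tfrac{y}{2}\varphi'-m\theta^{m-1}\varphi''\ge t\,\epsilon^{p-1}\varphi^{p}=M^{p-1}t^{2/(p+1)}\varphi^{p}$, and the right side blows up at $y=0$. On the diffusive scale $\sqrt t$ the reaction beats diffusion as soon as the excess is $\gg t^{-1/(p-1)}$. So the leftover terms have the wrong sign, and the exponent $p+1$ is not a ``verification loss''. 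Moreover, even a bound on $\sup(u-\theta)$ would not locate the front: you would still have to control the set where $f(u)>0$, which is exactly the interface you could not handle.

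The missing idea is to bound the position $\theta(t)$ of the level set $\{u=\theta\}$ directly. The paper does this by intersection-number comparison of $u(\cdot,t)$ with the stationary solutions $Q_b$ of $(Q^m)''+f(Q)=0$, $Q(0)=\theta+b$, $Q'(0)=0$. These equal $\theta$ at $l(b)\sim b^{-(p-1)/2}$ and vanish at $L(b)\sim b^{-(p+1)/2}$. Choosing $b(t)$ with $L(b(t))=r(t)\sim 2y_0\sqrt t$ and proving $\theta(t)\le l(b(t))$ gives $\theta(t)\le Gt^{1/2-1/(p+1)}$. For $x\ge\theta(t)$ one has $u\le\theta$, hence $f(u)=0$. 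So the translated PME profile $e(x-Mt^{1/2-1/(p+1)},t)$ is a supersolution on that half-line with boundary value $\theta\ge u$. No gluing is needed, and $p+1$ enters through $l(b)\sim L(b)^{(p-1)/(p+1)}$.

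\emph{Lower bound.} For $p>3$ you give no valid argument for $u(0,t)-\theta\ge ct^{-1/(p-1)}$. Running $\dot\epsilon=\epsilon^p$ backward is not a comparison. Also, $\theta+\epsilon(t)\varphi(x/\sqrt t)$ lies above $\theta>0$ everywhere, so it can never be placed under the compactly supported $u$, and a cutoff would need boundary information you do not have. The argument must use that $u$ is a threshold solution.

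The paper builds a solution $w$ of \eqref{1.1} trapped between $\theta+k(at+x^2)^{-1/(p-1)}$ and $\theta+\tfrac12 n$. Here $n=t^{-1/(p-1)}\varphi(|x|/\sqrt t)$ is a small Haraux--Weissler self-similar solution of $n_t=m\theta^{m-1}n_{xx}+n^p$. Such global decaying solutions exist because $p$ exceeds the one-dimensional Fujita exponent $3$, and this is the real reason for the case split. The paper then shows $u(0,t)>w(0,t+T)$ for all $t$. If the centres ever met, the zero-number property and the strong maximum principle would give $u<w$ afterwards, hence $(1+\epsilon_0)u(\cdot,t_2)\le w(\cdot,t_2+T)$. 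That contradicts the spreading of supercritical data (Theorem~\ref{A}) while $w\to\theta$. Only then does the subsolution $(1+\alpha(t))\xi^{1/m}(x/(2\beta(t)))$ on $x\ge0$, with $\alpha(t)=\tfrac{a}{\theta}(T+t)^{-1/(p-1)}$ and $\beta(t)=\sqrt{T_1+t}\,(1+k(T_1+t)^{-1/(p-1)})$, produce the shift.

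Finally, every half-line comparison, including your unshifted one for $1<p\le3$, needs the boundary inequality at the fixed point for \emph{all} later times. Locally uniform convergence to $\theta$ does not provide this. The paper gets $u(0,t)>\theta$ for all $t$ for symmetrically decreasing data (Lemma~\ref{yl2.3}) and passes to general data by Lemma~\ref{yl2.2}. Note also that the profile equals $\theta$ at $y=0$ and vanishes at $y_0$; your normalization $\Phi(y_0)=\theta$ is not the one that defines $y_0$.
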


Given that the aforementioned asymptotic characterization is $p$-dependent, our theorem demonstrates the impossibility of establishing a refined asymptotic characterization for general combustion type source terms $f$.

Instead of the Cauchy problem \eqref{1.1}, a type of free boundary problem has been studied by many researchers for the case $m=1$: Du and Lou \cite{du2015spreading} proved the trichotomy result, and Du, Lou and Zhou \cite{du2015nonlinear} established an estimate of the free boundary when the transition case occurs for combustion type $f$. Later, Lei et al. obtained a more precise characterization of the propagation speed for a specific type of $f$, where the propagation speed of the free boundary $h(t)$ essentially depends on $f$.
The works of Lou and Zhou \cite{lou2024convergence}, as well as the main results of this paper, are analogous to those in the aforementioned case of $m=1$. However, since the case $m > 1$ introduces significant structural changes and additional technical challenges, the proof techniques deviate substantially from those in previous works. In \cite{lei2018refined}, the argument hinges on the construction of a self-similar solution $v(x,t)$ to the nonlinear heat equation $v_t = v_{xx} + v^p$. By observing that $v(x,t) + \theta$ satisfies $v_t = v_{xx} + (v-\theta)^p$, the authors established a lower bound for $u(0,t)$ by leveraging the zero-number diminishing property from \cite{angenent1988zero}. Such properties, however, do not extend to the porous medium equation. To overcome this hurdle, we directly construct a pair of sub- and super-solutions for $v_t = (v^m)_{xx} + (v-\theta)^p$, which allows us to recover the same lower bound for $u(0,t)$ and subsequently estimate the lower bound of the free boundaries.

This paper is arranged as follows. In Section \ref{s2}, we present some basic results that are fundamental for this research and will be used in our proofs. In Section \ref{s3}, we give the proofs of our main results: Firstly we give the upper bound estimate of the free boundary $r(t)$ by investigating the level set $\{\theta(t)| u(\theta(t),t)=\theta\}$ in subsection 3.1. Secondly we estimate the lower bound of the free boundary $r(t)$ for $p>3$ in subsection 3.2, the key point is to obtain the lower bound of $u(0,t)$, which is the most challenging part of the paper. In subsection 3.3, we finally  construct suitable lower solutions to obtain a lower bound estimate for the free boundary $r(t)$ for $1<p\leq3$.

\section{Preliminary}
\label{s2}
In this section, we give some basic facts, which will be used frequently in the forthcoming sections. For convenience of discussion, we first present the definition of weak solutions and the restrictions on the initial values.

\begin{dy}
For any $T>0$, denote $\Omega_{T}:=\mathbb{R}\times(0,T)$. A very weak solution $u(x,t)$ of (\ref{1.1}) on $\Omega_{T}$ is a function $u(x,t)\in C(\Omega_{T})\cap L^{\infty}(\Omega_{T})$ with the following properties:
\begin{equation}
\int_{\mathbb{R}}u(x,T)\varphi(x,T)dx=\int_{\mathbb{R}}u_{0}(x)\varphi(x,0)dx+\int\int_{\Omega_{T}}f(u)\varphi dxdt+\int\int_{\Omega_{T}}[u\varphi_{t}+u^{m}\varphi_{xx}]dxdt,\label{1.5}
\end{equation}
for all $\varphi\in C_{c}^{\infty}(\Omega_{T})$, and a weak subsolution (supersolution) $u(x,t)\in C(\Omega_{T})\cap L^{\infty}(\Omega_{T})$ is defined by (\ref{1.5}) with equality replaced by $\leq(\geq)$ and imposing the additional condition $\varphi\ge 0${\rm (}cf. \cite[Chapter 5]{vazquez2007porous}{\rm )}.
\end{dy}
In the rest of this paper, the initial function $u_{0}$ will be chosen from
\begin{equation}
\mathfrak{X}:=
\left\{\psi\in C(\mathbb{R})\left|
\begin{split}
&\text{there exist}\  -\infty<l_{1}<r_{1}\leq l_{2}<r_{2}\leq\cdot\cdot\cdot\leq l_{n}<r_{n}<+\infty\ \text{ such}\\
&\text{that}\ \psi(x)>0\ \text{in}\ (l_{j},r_{j})\  \text{for}\ j=1,...,n,\ \text{and}\ \psi(x)=0\  \text{otherwise}.
\end{split}
\right.
\right\}.\label{ii}
\end{equation}
\indent

Next, we review some of the results from Lou and Zhou's paper that are directly relevant to our research.

\begin{mytheorem}\cite[Theorem 1.4]{lou2024convergence}
Suppose that $f(x)$ is a combustion-type function, characterized by conditions \eqref{ff} and \eqref{1.4}. Let $u(x, t)$ be the solution of $(\ref{1.1})$ with initial datum $u_{0}=\psi\in\mathfrak{X}$. Then one of the following situations occurs:
\begin{itemize}
\item[\rm(\rmnum{1})] \textbf{Vanishing}:
$$\lim_{t\rightarrow\infty}\|u(x,t)\|_{L^{\infty}(\mathbb{R})}=0;$$
\item[\rm(\rmnum{2})] \textbf{Spreading}:
$$\lim_{t\rightarrow\infty}\|u(x,t)-1\|_{C^{2}([-M,M])}=0,\ \forall  M>0;$$
\item[\rm(\rmnum{3})] \textbf{Transition}:
$$\lim_{t\rightarrow \infty} \|u(x,t)-\theta\|_{C^{2}([-M,M])}=0,\ \forall M>0.$$\ \
\end{itemize}
  Moreover, if $u_{0}=\lambda\psi$ for some $\psi\in\mathfrak{X}$, then there exists $\lambda^{*}=\lambda^{*}(\psi)\in(0,\infty)$ such that vanishing happens when $0<\lambda<\lambda^{*}$, spreading happens when $\lambda>\lambda^{*}$, and transition happens when $\lambda=\lambda^{*}$.
\label{A}
\end{mytheorem}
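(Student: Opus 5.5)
The plan follows the strategy of \cite{lou2024convergence}. First prove a general convergence theorem: every bounded solution of (\ref{1.1}) with $u_0\in\mathfrak{X}$ converges, as $t\to\infty$, to a stationary solution in $C^2_{loc}$. Then classify the nonnegative stationary solutions for combustion-type $f$. Finally obtain the sharp threshold in $\lambda$ by comparison and openness arguments.

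\textbf{Step 1: convergence.} Boundedness follows by comparing with the constant supersolution $\max\{\|u_0\|_\infty,1\}$, using $f<0$ on $(1,\infty)$. For convergence, I would combine two tools.
\begin{itemize}
\item The $\omega$-limit set in $C_{loc}$ is nonempty, compact and invariant. This uses interior H\"older/$C^2$ estimates in the positivity set, where the equation is uniformly parabolic because $u\ge\theta>0$ there; near the free boundary one uses the standard PME regularity theory of \cite{vazquez2007porous}.
\item An intersection-comparison argument with stationary solutions. Since the zero-number property of \cite{angenent1988zero} is not available for the degenerate equation, I would use the Lyapunov functional
\[
E(u)=\int \Big(\tfrac12|(u^m)_x|^2-G(u)\Big)dx,\qquad G'(u)=m u^{m-1}f(u).
\]
This functional is nonincreasing along $v=u^m$, and $\frac{d}{dt}E=-\int m u^{m-1}u_t^2$. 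Hence the $\omega$-limit set consists of stationary solutions, and since $\omega$ is connected, one stationary solution is selected.
\end{itemize}
I expect this step to be the main obstacle: one must control the degeneracy at the free boundary and show that $\omega$ is a single point rather than a continuum of constants.

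\textbf{Step 2: stationary solutions.} Look at the phase plane of $(w^m)''+f(w)=0$ with $w\ge0$, using the first integral $\tfrac12((w^m)')^2+G(w)=\mathrm{const}$. On $[0,\theta]$ we have $f\equiv0$, so $w^m$ is linear there. Since $G\ge0$ and $G$ is increasing on $(\theta,1)$, there are no compactly supported or ground-state profiles. The only bounded nonnegative entire stationary solutions are therefore the constants $c\in[0,\theta]$ and $1$, together with solutions periodic about $1$ that are excluded by the initial support structure. To exclude a limit $c\in(0,\theta)$: if $u\to c<\theta$ locally uniformly, then eventually $u$ lies below $\theta$ on a large set. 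A support and mass argument then applies: the pure PME has the mass-preserving Barenblatt decay $t^{-1/(m+1)}$, and $f\ge0$ can only add mass through the region $\{u>\theta\}$, which shrinks. This forces the limit $0$. Hence exactly the three alternatives vanishing, spreading and transition remain. For spreading, the $C^2$ convergence to $1$ follows from the parabolic regularity once $u$ stays near $1$.

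\textbf{Step 3: the threshold.} By comparison, $\lambda\mapsto u_\lambda$ is monotone. Both extremes are realized:
\begin{itemize}
\item For small $\lambda$, $\|u_0\|_\infty<\theta$. Then $u$ solves the pure PME and vanishes.
\item For large $\lambda$, $u_0$ dominates a compactly supported subsolution whose values exceed a level $\theta'\in(\theta,1)$ on a long interval. Such subsolutions are built from the phase plane, and they force spreading.
\end{itemize}
Both sets are open:
\begin{itemize}
\item Vanishing is open: once $\|u(\cdot,T)\|_\infty<\theta$, continuous dependence on data gives the same for nearby $\lambda$.
\item Spreading is open: once $u(\cdot,T)$ exceeds the spreading subsolution, so do nearby solutions.
\end{itemize}
Finally, the transition set must be a single point. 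Suppose $\lambda_1<\lambda_2$ both give transition. Strict comparison, via positivity propagation of $u_{\lambda_2}-u_{\lambda_1}$ on the common support, should give $u_{\lambda_2}(\cdot,T)\ge u_{\lambda_1}(\cdot+\xi,T)+\delta$-type separation on a large interval. Combining this with the convergence to $\theta$, one pushes $u_{\lambda_2}$ above a spreading subsolution, which is a contradiction. Hence $\lambda^*$ exists and is unique.
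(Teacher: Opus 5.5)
\textbf{Verdict: the proposal has genuine gaps.} The paper does not prove this statement; it quotes it from \cite{lou2024convergence}. The way it is used later shows that the underlying proof is an intersection-comparison argument for the PME in the pressure $v=\frac{m}{m-1}u^{m-1}$, coupled with Darcy's law at the free boundaries. The evidence is the reflection argument of Lemma \ref{yl2.3}, the intersection patterns of Lemmas \ref{yl3.2*}--\ref{yl3.4*}, and the zero-number diminishing property \cite[Lemma 3.2]{lou2024convergence} invoked in the proof of Lemma \ref{yl3.4}. So, contrary to your remark, a zero-number property is available.

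\textbf{Step 1 (convergence).} Your Lyapunov functional does not carry this step.
\begin{itemize}
\item On $\mathbb{R}$, $E(u(\cdot,t))$ need not be bounded below: when spreading occurs, $\int G(u)\,dx\to\infty$ because $G(1)>0$. Monotonicity of $E$ alone therefore does not make $\omega$-limit points stationary.
\item More seriously, even granting stationarity, the steady states include the continuum of constants in $[0,\theta]$. Connectedness then only says that $\omega$ is $\{1\}$ or an interval of constants $[c_1,c_2]\subset[0,\theta]$.
\item Selecting a single limit is the heart of the theorem, and you leave it open. It needs a zero-number argument, for example counting sign changes of $u(\cdot,t)-c$ for $c\in(c_1,c_2)$, which is precisely the tool you set aside.
\end{itemize}

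\textbf{Excluding limits $c\in(0,\theta)$.} Your argument is unjustified as written. Local uniform convergence to $c<\theta$ says nothing about $\{u>\theta\}$ far away, so there is no reason for that set to ``shrink''. The right argument is a simple reflection:
\begin{itemize}
\item For $\mu\ge r_n$ one has $u_0(x)\ge u_0(2\mu-x)$ for $x<\mu$, so comparison gives $u(x,t)\ge u(2\mu-x,t)$ there.
\item Hence, for all $t$, $u(\cdot,t)$ is nonincreasing on $[r_n,\infty)$ and, symmetrically, nondecreasing on $(-\infty,l_1]$. Its maximum is therefore attained in $[l_1,r_n]$.
\item A limit $c<\theta$ then forces $\max_x u(\cdot,t)<\theta$ at some time. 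After that, $u$ solves the pure PME and decays like $t^{-1/(m+1)}$.
\end{itemize}

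\textbf{Step 3 (uniqueness of the transition value).} This is the crucial point, and the argument has a gap there. If $\lambda_1<\lambda_2$ both give transition, then $u_{\lambda_2}-u_{\lambda_1}\to0$ locally uniformly. A strict separation at one time is therefore not preserved. It cannot by itself push $u_{\lambda_2}$ above a subsolution that exceeds some $\theta'>\theta$ on a long interval. You need a mechanism that makes $\theta$ unstable from above for ordered solutions. The natural ingredient, standard in the case $m=1$, is the hypothesis $f'>0$ on $(\theta,\theta+\sigma)$ in \eqref{1.4}, which your argument never uses. The present paper relies on exactly this sharpness in Lemma \ref{yl3.4}, when it asserts that the datum $(1+\epsilon_0)u(\cdot,t_2)$ leads to spreading.

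\textbf{Two smaller points.}
\begin{itemize}
\item For large $\lambda$, $\lambda\psi$ keeps the fixed support of $\psi$. It cannot dominate a phase-plane subsolution that exceeds $\theta'$ on a long interval. You first need to let the large mass spread. For example, $f(u)\ge -Ku$ makes $e^{Kt}u$ a supersolution of a time-changed PME, which you can compare with Barenblatt solutions.
\item With $P=(w^m)'$, the first integral is $\frac12P^2+G(w)$, and $G$ has a strict local maximum at $1$. So $1$ is a saddle-type point of $(w^m)''+f(w)=0$ with no closed orbits around it. The bounded nonnegative steady states are just the constants in $[0,\theta]\cup\{1\}$.
\end{itemize}
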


\begin{mytheorem}\cite[Proposition 5.7]{lou2024convergence}
Assume that $f$ is of combustion type and let $u(x,t)$ be the solution of $(\ref{1.1})$ for which the transition happens.  Then the support of $u(x,t)$  is equal to $[l(t),r(t)]$ with the left and
right free boundaries $l(t)$ and $r(t)$. Moreover, the following estimate holds
$$-l(t),r(t)=2y_{0}\sqrt{t}[1+o(1)]\ \text{as} \ t\rightarrow\infty$$
for some $y_{0}\in(0,\theta^{\frac{m-1}{2}})$.\label{B}
\end{mytheorem}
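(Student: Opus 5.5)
The plan is to treat the transition solution, for large times, as a solution of the pure porous medium equation $u_t=(u^m)_{xx}$ in the outer regions $x>M$ and $x<-M$. There it is driven by a boundary value close to $\theta$ at $x=\pm M$. The free boundaries are then compared with those of self-similar "boundary-driven" solutions, whose support grows like $2y_0(\theta)\sqrt t$.

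\textbf{Step 1 (support is an interval).} For $u_0\in\mathfrak X$, the positivity set of $u(\cdot,t)$ is nondecreasing in $t$, by the retention property of the porous medium equation with $f\ge 0$ near $0$. Each component of the support expands with a positive front speed. By Theorem \ref{A}(iii), $u\to\theta>0$ uniformly on every compact set, so all gaps between components are eventually filled. Hence for $t\ge T_0$ the support is a single interval $[l(t),r(t)]$. To get this for all $t>0$, I would define $l,r$ as the extreme points of the support and observe that the interior gaps play no role in the asymptotics. Monotonicity of $-l$ and $r$ follows from the same retention property.

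\textbf{Step 2 (the self-similar profile).} Consider the half-line problem
\[
v_t=(v^m)_{xx}\ \ (x>0),\qquad v(0,t)=a,\qquad v(x,0)=0 .
\]
It has a self-similar solution $v=\Phi_a(x/\sqrt t)$, where $-\tfrac{\xi}{2}\Phi_a'=(\Phi_a^m)''$, $\Phi_a(0)=a$, and the support of $\Phi_a$ is $[0,2y_0(a)]$. Scaling gives $y_0(a)=a^{\frac{m-1}{2}}y_0(1)$, so $y_0(a)$ is continuous in $a$. The bound $y_0(\theta)<\theta^{\frac{m-1}{2}}$ comes from integrating the ODE against suitable weights: $\Phi$ is decreasing, $\Phi\le\theta$, and the flux identity at the front bounds the front position. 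Set $y_0:=y_0(\theta)$.

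\textbf{Step 3 (two-sided comparison).} Fix $\varepsilon>0$, $M>0$ and $T$ large, so that $|u(\pm M,t)-\theta|<\varepsilon$ for $t\ge T$ by Theorem \ref{A}.

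For the lower bound, $f\ge0$ on $[0,\theta]$ and $f(u)\ge0$ wherever $u$ is near $\theta$. So $u$ is a supersolution of the pure PME on $x>M$ as long as $u\le \theta+\sigma$ there. Then $\Phi_{\theta-\varepsilon}\big((x-M)/\sqrt{t-T}\big)$ is a subsolution, which gives $r(t)\ge M+2y_0(\theta-\varepsilon)\sqrt{t-T}$.

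For the upper bound, I need $u\le\theta+\varepsilon$ on $x>M$ for $t\ge T$. Then the source satisfies $f(u)\le K(u-\theta)^+\le K\varepsilon$ there. A suitably time-shifted and rescaled $\Phi_{\theta+2\varepsilon}$, with a small correction absorbing the source $K\varepsilon$ in the region where $u>\theta$, is a supersolution; alternatively one can use a supersolution of the form $\Phi_{\theta+2\varepsilon}$ composed with a slightly accelerated time. This yields $r(t)\le M+C+2y_0(\theta+2\varepsilon)\sqrt t$.

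Dividing by $\sqrt t$ and letting $\varepsilon\to0$, continuity of $y_0(\cdot)$ gives $r(t)=2y_0\sqrt t\,[1+o(1)]$. The left boundary $l(t)$ is handled symmetrically.

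\textbf{Main obstacle.} The hard part is the global bound $\limsup_{t\to\infty}\sup_{x\in\mathbb R}u(x,t)\le\theta$. Theorem \ref{A} only gives locally uniform convergence, so a bump above $\theta$ could a priori escape to infinity.

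My first attempt would be a contradiction argument. Suppose $u(x_n,t_n)\ge\theta+\delta$ with $|x_n|\to\infty$. Translate and pass to the limit to get an entire solution that exceeds $\theta+\delta$ somewhere. Then invoke the spreading mechanism: by continuity, a region above $\theta+\delta$ of width comparable to the critical ignition radius persists long enough to dominate a compactly supported subsolution that spreads to $1$. That would force $u\to1$ along the translates. Combined with monotone-in-time properties of the support, this would eventually give $u\to1$ near the origin as well, contradicting transition.

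If this fails, the fallback is a direct $L^\infty$ comparison. On $\{u>\theta\}$ the source is bounded by $K(u-\theta)$, and a mass/energy estimate on $(u-\theta)^+$ shows it cannot stay large once $u$ is near $\theta$ on a large central interval.
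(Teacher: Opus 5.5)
\textbf{The gap is in the upper bound of Step 3.} The missing ingredient is not the sup bound you flag. It is control of the \emph{width} of the reaction zone $\{u>\theta\}$.

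Knowing $u\le\theta+\varepsilon$ on $\{x\ge M\}$ does not let you dominate $u$ by any modification of $\Phi_{\theta+2\varepsilon}$. Next to $x=M$ there is a strip of width of order $c_\varepsilon\sqrt t$ where $\Phi_{\theta+2\varepsilon}>\theta$. On it the profile solves the pure PME while $f(\Phi_{\theta+2\varepsilon})>0$, so it is a strict \emph{sub}solution of \eqref{1.1} there.

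Your accelerated-time version $\bar u=\Phi(\eta)$, $\eta=(x-M)/\sqrt{\lambda t}$, gives $\bar u_t-(\bar u^m)_{xx}=(1-\lambda^{-1})\frac{\eta}{2t}|\Phi'(\eta)|$. This vanishes at $\eta=0$ and decays like $t^{-1}$. Meanwhile $f(\bar u)$ stays near $f(\theta+2\varepsilon)>0$ close to $x=M$ for all $t$. More generally, any correction must absorb a source of fixed size on a region of width $\sim\sqrt t$ for all time, which is not a small perturbation of a profile of mass $\sim\sqrt t$.

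Smallness of $(u-\theta)^+$ says nothing about how wide $\{u>\theta\}$ is. A reaction zone of width $c\sqrt t$ would push the front out to roughly $(c+2y_0)\sqrt t$. What you need is that the level point $\theta(t)$, defined by $u(\theta(t),t)=\theta$, satisfies $\theta(t)=o(r(t))$. Given that, take a nondecreasing $h\ge\theta(\cdot)$ with $h(t)=o(r(t))$. Then $e(x-h(t),t)$ is a supersolution on $\{x\ge h(t)\}$ exactly as in Lemma~\ref{yl3.3}, since $u\le\theta$ and $f=0$ there. This yields $r(t)\le 2y_0\sqrt t+o(r(t))$.

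The paper imports this statement from Lou--Zhou without proof. However, the mechanism that supplies $\theta(t)=o(r(t))$ is the one reproduced in Section 3.1.
\begin{itemize}
\item First reduce to symmetrically decreasing data via Lemma~\ref{yl2.2}. Lemma~\ref{yl2.3} then removes your ``main obstacle'', because $\sup_{x\ge M}u(x,t)=u(M,t)\to\theta$.
\item Lemma~\ref{yl2.3} also gives $u(0,t)>\theta$, hence $r(t)\ge 2y_0\sqrt t$ as in Lemma~\ref{yl3.6}. Your lower bound via $\Phi_{\theta-\varepsilon}$ is fine too and needs no sup bound: $\Phi_{\theta-\varepsilon}<\theta$, so it is an exact solution of \eqref{1.1}.
\item Next, the zero-number comparison with the stationary profiles $Q_b$ of \eqref{3.1} (Lemmas~\ref{yl3.2*}--\ref{yl3.4*}) gives $\theta(t)\le l(b(t))$, where $r(t)=L(b(t))$. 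This is the claim inside the proof of Lemma~\ref{yl3.2}.
\item For a general combustion $f$, the map $Q\mapsto\int_\theta^Q r^{m-1}f(r)\,dr$ is convex on $[\theta,\theta+\sigma]$. This gives $l(b)\le Cb\,(L(b)-l(b))$, hence $\theta(t)=o(r(t))$.
\end{itemize}
Your mass/energy fallback cannot replace this step. It controls the size of $(u-\theta)^+$, not the extent of its support.

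Finally, the bound $y_0<\theta^{\frac{m-1}{2}}$ in Step 2 does not follow from the flux identity. That identity, $-\xi'(0)=2\int_0^{y_0}\xi^{1/m}\,dy$, bounds the front from \emph{below}. Integrate $\xi''=-2y(\xi^{1/m})'$ from $y$ to $y_0$ and use $\xi'(y_0)=0$. For $u=\xi^{1/m}$ and the pressure $P=\frac{m}{m-1}u^{m-1}$, this gives $|P'(y)|=2y+\frac{2}{u(y)}\int_y^{y_0}u\in(2y,2y_0)$. Hence $y_0^2<\frac{m}{m-1}\theta^{m-1}<2y_0^2$, and for $1<m\le 2$ this forces $y_0>\theta^{\frac{m-1}{2}}$. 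So that part of the quoted statement cannot be proved in the present normalization. It plays no role in the asymptotics, which only use $y_0=y_0(\theta)\in(0,\infty)$.
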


\begin{zy}
To obtain the lower bound of $v_{xx}$ {\rm (}where $v=\frac{m}{m-1}u^{m-1}${\rm )}, paper \cite{lou2024convergence} assumed $f\in C^{2}$. However, when $1\le p<2$, the function $f$ considered in this paper fails to satisfy the smoothness assumption. It should be noted that $f$ is only discontinuous in its second derivative at the point $x=\theta>0$ and $u(x,t)$ is classical in $E_{1}:=\{(x,t)|u(x,t)\geq\theta/2\}$, we can estimate $v_{xx}$ through classical parabolic estimation in $E_{1}$, and use the same argument in \cite{lou2024convergence} to estimate $v_{xx}$ in $E_{2}:=\{(x,t)|u(x,t)<\theta/2\}$. Consequently, the two results established in \cite{lou2024convergence} that were mentioned above still hold under our current assumptions.
\end{zy}

In many cases, it is convenient to consider
$$v(x,t):=\frac{m}{m-1}u^{m-1}(x,t)$$
so-called mathematician's pressure. Using variable $v$, problem (\ref{1.1}) is converted into the following problem
\begin{equation}
\left\{
\begin{split}
&v_{t}=(m-1)vv_{xx}+v_{x}^{2}+g(v),\qquad \,\ \, \,x\in \mathbb{R},\ \,\ t>0,\\
&v(x,0)=v_{0}(x)=\frac{m}{m-1}u^{m-1}_{0},\ \ \, \qquad x\in \mathbb{R},\\
\end{split}\label{2.1**'}
\right.
\end{equation}
where
$$g(v):=m\big(\frac{m-1}{m}v\big)^{\frac{m-2}{m-1}}f\bigg(\big(\frac{m-1}{m}v\big)^{\frac{1}{m-1}}\bigg).$$

As shown in Theorem \ref{A}, when the transition happens, the solution of $(\ref{1.1})$ with initial datum $u_{0}=\psi\in\mathfrak{X}$ satisfies $u(x, t)\rightarrow\theta$   as $t\rightarrow\infty$ locally uniformly in $\mathbb{R}$ (especially, $u(0,t)\rightarrow\theta$). Then $v(0,t)=\frac{m}{m-1}u^{m-1}(0,t)\rightarrow\frac{m}{m-1}\theta^{m-1}$ as $t\rightarrow\infty$. Therefore, the following auxilary problem will play a crucial role in the relevant research:
\begin{equation}
\left\{
\begin{split}
&v_{t}=(m-1)vv_{xx}+v_{x}^{2},\ \ \ \ \quad \ \ 0<x<\rho(t),\ \,\ t>0,\\
&v(0,t)=\Theta:=\frac{m}{m-1}\theta^{m-1},\  \ \ \ t>0,\\
&v(\rho(t),t)=0,\ \,\, \,\, \qquad \qquad \ \quad \ \   t>0,\\
&\rho'(t)=-v_{x}(\rho(t),t),\ \ \,\quad \quad \quad \ \ t>0,
\end{split}\label{2.3**}
\right.
\end{equation}
which is equivalent to the following equation
\begin{equation}
\left\{
\begin{split}
&u_{t}=(u^{m})_{xx},\ \ 0<x<\rho(t),\ \,\ t>0,\\
&u(0,t)=\theta,\ \ u(\rho(t),t)=0,\ \ \ \ t>0.
\end{split}\label{2.3}
\right.
\end{equation}
We consider the self-similar solutions of (\ref{2.3}) with structure $u(x,t)=\xi^{\frac{1}{m}}\left(\frac{x}{2\sqrt{t}}\right)$, which leads to the following problem:
 \begin{equation}
\left\{
\begin{split}
&\xi''(y)=-2y(\xi^{\frac{1}{m}})'(y)=-2\frac{y}{m}\xi^{\frac{1-m}{m}}\xi'(y),\quad\quad y>0,\\
&\xi(0)=\theta^{m},\ \xi'(0)=-2\theta^{\frac{m+1}{2}}.
\end{split}\label{2.2}
\right.
\end{equation}

Next, we recall a relevant result from \cite{lou2024convergence} on the existence of self-similar solutions to (\ref{2.2}), which will be utilized to construct sub-solutions and supersolutions in the subsequent proofs.

\begin{yl}\cite[Lemma 5.6]{lou2024convergence}
There exists $y_0$ satisfying $0< y_{0}<\theta^{\frac{m-1}{2}}$ such that problem $(\ref{2.3})$ has a self-similar solution
 \begin{equation}
e(x,t)=\xi^{\frac{1}{m}}\left(\frac{x}{2\sqrt{t}}\right)\label{2.3*}
\end{equation}
with $\rho(t)=2y_{0}\sqrt{t}$, where $\xi$ satisfies  (\ref{2.2}). \label{yl2.1*}
\end{yl}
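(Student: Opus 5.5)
The plan is a shooting argument for the ODE \eqref{2.2}, using the pressure variable to handle the degenerate endpoint. Set $w=\xi^{1/m}$, so the equation reads $(w^m)''=-2y\,w'$. Integrating once from $0$ gives the identity
$$\xi'(y)-\xi'(0)=-2y\,w(y)+2\int_0^y w(s)\,ds .$$
I will treat the slope $\xi'(0)=-a$ as a free shooting parameter $a>0$. The value $a=2\theta^{\frac{m+1}{2}}$ written in \eqref{2.2} should then be the one selected below, or else a normalization of the pressure slope that is equivalent to it.

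The admissible profile is one that reaches $\xi=0$ at a finite $y_0$ with zero flux there, i.e. $\xi'(y_0)=0$. In the pressure variable $V=\frac{m}{m-1}w^{m-1}$ this is exactly the front condition $\rho'=-v_x(\rho,t)$ of \eqref{2.3**}, since $\rho(t)=2y_0\sqrt t$ gives $\rho'=y_0/\sqrt t$. As long as $\xi>0$ and $\xi'<0$, the equation gives $\xi''>0$. Hence $\xi$ is decreasing and convex, $\xi'$ increases towards $0$, and the local solution continues as long as $\xi>0$.

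For the shooting I define two sets of parameters.
\begin{itemize}
\item $A_+=\{a:\ \xi$ hits $0$ at some finite $y$ with $\xi'(y)<0\}$. This set is open by continuous dependence, since the hitting is transversal.
\item $A_-=\{a:\ \xi'$ reaches $0$ while $\xi$ is still positive$\}$. This set is also open, because at that point $\xi''=0$ and $\xi$ has a strict positive minimum.
\end{itemize}
Small $a$ lie in $A_-$: the integrated identity shows $\xi'$ becomes nonnegative quickly, while $\xi$ stays near $\theta^m$. Large $a$ lie in $A_+$: for $y\le 1/\sqrt a$ we have $|\xi''|\lesssim a\,y$ as long as $w\le\theta$, so $\xi$ hits zero with slope close to $-a$. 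Connectedness of $(0,\infty)$ then yields some $a^*\notin A_+\cup A_-$. For this $a^*$, $\xi$ and $\xi'$ vanish simultaneously at some $y_0<\infty$. One must still exclude $y_0=\infty$. The integrated identity $a=2\int_0^y w-2yw$ together with $w\ge0$ forces $w$ to reach $0$ in finite $y$: otherwise $2\int_0^y w-2yw\to a$ would require $w$ to be integrable and to decay, and comparison with the linearized tail gives a contradiction.

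The bounds on $y_0$ come from evaluating the integrated identity at $y_0$. There $\xi(y_0)=\xi'(y_0)=0$ gives $a=2\int_0^{y_0}w<2\theta y_0$. Integrating once more and using $\xi'\ge -a$ gives $\theta^m=-\int_0^{y_0}\xi'\le a\,y_0$. These two relations locate $y_0$. To obtain $y_0<\theta^{\frac{m-1}{2}}$, I will pass to the pressure $V$: at the front $V'(y_0)=-2y_0$, and convexity or concavity of $V$ (from $V''$ having a sign in the pressure form of the equation) bounds $y_0^2$ by a constant multiple of $V(0)=\frac{m}{m-1}\theta^{m-1}$.

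The step I expect to be the main obstacle is the degenerate endpoint. I need to show that the critical trajectory $a^*$ genuinely reaches the front in finite $y$ with the correct Lipschitz pressure behaviour. This needs a local analysis near $\xi=0$ in the $V$ variable, where the ODE becomes regular, instead of working in $\xi$.
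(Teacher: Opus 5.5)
The paper does not prove this lemma; it quotes it from Lou--Zhou. Your argument therefore has to stand on its own, and it has two genuine gaps.

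\textbf{First gap: the shooting dichotomy.} Your set $A_-$ is empty. The equation is linear in $\xi'$, so while $\xi>0$ you have
\[
\xi'(y)=\xi'(0)\exp\Big(-\frac{2}{m}\int_0^y s\,\xi(s)^{-\frac{m-1}{m}}\,ds\Big).
\]
Hence $\xi'$ never vanishes where $\xi>0$, and small $a$ do not make $\xi'$ nonnegative. What happens instead is this: since $\xi\le\theta^m$, the formula gives $|\xi'(y)|\le a\,e^{-y^2/(m\theta^{m-1})}$, so for small $a$ the profile decreases to a positive limit $\xi(\infty)$. The correct competitor of $A_+$ is $\{a:\ \xi>0 \text{ on } [0,\infty),\ \xi(\infty)>0\}$, which is open thanks to this uniform Gaussian tail.

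Your exclusion of $y_0=\infty$ fails for the same reason. Letting $y\to\infty$ in your identity gives $a=2\int_0^\infty(w-w_\infty)\,dy$, which is perfectly compatible with $w_\infty>0$ and needs no integrability of $w$. To repair the argument you must do three things:
\begin{enumerate}
\item take $a^*$ on the boundary of that open set;
\item rule out separately the case $\xi>0$ everywhere with $\xi(\infty)=0$;
\item show that a zero of $\xi$ with $\xi'(y_0^-)=0$ really satisfies Darcy's law $V'(y_0^-)=-2y_0$ (zero flux alone only gives $wV'\to0$).
\end{enumerate}
The last two follow from $h:=V'+2y$. From $(m-1)VV''=-V'h$ you get $h'=2$ at every zero of $h$, so $h$ changes sign at most once, and only upwards. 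It must change sign if $V>0$ on $[0,\infty)$, since otherwise $V(y)<V(0)-y^2$. With $G=-V'>0$, the relation $dG/dV=h/((m-1)V)$ shows what happens as $V\downarrow0$:
\begin{itemize}
\item a positive $h$ drives $G$ negative, which is impossible;
\item a finite negative limit of $h$ forces $G\to+\infty$, which contradicts that limit;
\item $h\to-\infty$ is the nonzero-flux case.
\end{itemize}
So at $a^*$ necessarily $h\to0$.

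\textbf{Second gap, the more serious one: the final bound.} Your last step cannot produce the stated inequality. At a Darcy front, $h$ starts negative, can only cross $0$ upwards, and returns to $0$ at $y_0$. Hence $h<0$ on $[0,y_0)$, $V$ is strictly concave, and $2y<-V'(y)<2y_0$ there. Integrating over $[0,y_0]$ gives
\[
\tfrac12V(0)<y_0^2<V(0),\qquad V(0)=\tfrac{m}{m-1}\theta^{m-1}.
\]
So the concavity argument yields $y_0<\sqrt{m/(m-1)}\,\theta^{\frac{m-1}{2}}$, not $y_0<\theta^{\frac{m-1}{2}}$. Worse, the left inequality gives $y_0>\theta^{\frac{m-1}{2}}$ for every $1<m\le 2$. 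For the Darcy front, which is what the paper needs in Lemma \ref{yl3.3}, the inequality in the statement is false in that range. Your own relations $a<2\theta y_0$ and $\theta^m\le ay_0$ only give the lower bound $y_0^2>\theta^{m-1}/2$.

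Similarly, the tangent line of $V$ at $0$ gives
\[
-\xi'(0)=\theta|V'(0)|<\theta V(0)/y_0<\sqrt{2m/(m-1)}\,\theta^{\frac{m+1}{2}},
\]
and the right-hand side is at most $2\theta^{\frac{m+1}{2}}$ when $m\ge2$. So your hedge that the shooting selects the slope prescribed in \eqref{2.2} also fails.

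A repaired shooting argument proves the following: there is a Darcy self-similar front with slope $a^*=c_m\theta^{\frac{m+1}{2}}$ (by the scaling $\xi\mapsto\lambda^m\xi(\lambda^{\frac{1-m}{2}}\,\cdot)$) and $y_0\in\big(\sqrt{m/(2(m-1))},\sqrt{m/(m-1)}\big)\,\theta^{\frac{m-1}{2}}$. The normalization in the statement has to be corrected rather than proved.
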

\indent

Now we give the relationship between the free boundaries of two transition solutions, which implies that we only need to estimate the solution corresponding to specific initial data.

\begin{yl}
Suppose that $f$ is of combustion type and $u_{0}^{1},u_{0}^{2}\in \mathfrak{X}$ are two functions such that the transition happens for the corresponding solutions $u(x,t;u_{0}^{1})$ and $u(x,t;u_{0}^{2})$. Then there exists a constant $C>0$ such that \label{yl2.1}
$$|r(t;u_{0}^{1})-r(t;u_{0}^{2})|\leq C,\ \ |l(t;u_{0}^{1})-l(t;u_{0}^{2})|\leq C.$$\label{yl2.2}
\end{yl}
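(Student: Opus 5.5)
The idea is to compare the two transition solutions through time shifts of a single solution, and then control the cost of a time shift using the $\sqrt{t}$ growth of the free boundaries. The tools are the comparison principle for (\ref{1.1}), which holds for weak sub- and supersolutions in the sense of (\ref{1.5}), and the trichotomy in Theorem \ref{A}.

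\textbf{Step 1: trap one solution between time shifts of the other.} Write $u_i(x,t)=u(x,t;u_0^i)$. By Theorem \ref{A}, both $u_i\to\theta$ in $C^2([-M,M])$ for every $M$. I would like times $T_1,T_2>0$ and a shift $x_0$ with
$$u_2(x+x_0,t)\le u_1(x,t+T_1),\qquad u_1(x,t+T_2)\ge u_2(x,t)\quad(\text{suitably arranged}).$$
Getting this is the main obstacle, because at a transition solution both solutions converge to $\theta$, and neither dominates the other at a single time. A direct ordering argument at one time does not work.

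\textbf{Step 1, what I would try.} I would work in the pressure variable $v$, where the support is $[l(t),r(t)]$, and exploit the monotonic growth of the support. For $T$ large, $u_1(\cdot,T)$ is close to $\theta$ on a large interval. Its support is also much wider than that of $u_0^2$, which is fixed and compact. However, $u_0^2$ may exceed $\theta$ somewhere. Comparing with $u_1(\cdot,T)$ directly would then be false.

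I would avoid this by replacing $u_0^2$ with its evolution at a large time $s$. At time $s$, $u_2(\cdot,s)$ is $C^2$-close to $\theta$ on $[-M,M]$ and has support width about $4y_0\sqrt{s}$.

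It is still possible that $u_2(\cdot,s)>\theta$ on part of the set, so strict domination by a transition solution can fail. I therefore expect the actual argument to use monotonicity in $t$. Lou--Zhou show that $-l$ and $r$ are nondecreasing, and after a large time the solutions are monotone near the fronts. I would compare near the fronts only, using the self-similar barriers from Lemma \ref{yl2.1*}. Concretely, I would build sub- and supersolutions of the form $e(x-x_0,t+\tau)$ on the region where $u\le\theta$, together with boundary data near $\theta$ on the interior edge.

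\textbf{Step 2: convert the comparison into a free-boundary bound.} Suppose the comparison gives
$$r_2(t)\le r_1(t+T)+C_0,$$
and symmetrically for the other ordering and for the left boundaries. By Theorem \ref{B}, and more precisely by the barrier $2y_0\sqrt{t+T}$, we have
$$r_1(t+T)-r_1(t)\approx 2y_0\bigl(\sqrt{t+T}-\sqrt{t}\bigr)\to0.$$
The bound $|r_1-r_2|\le C$ then follows. The same argument applies to $l$.

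**Anticipated difficulty.** The hardest point is producing the ordering on the region $\{u\le\theta\}$ near the fronts, where $f\equiv0$ and the problem reduces to the porous medium equation $u_t=(u^m)_{xx}$. I would handle it with the problem (\ref{2.3}) with a Dirichlet value slightly above or below $\theta$, where interior closeness to $\theta$ supplies that boundary value.
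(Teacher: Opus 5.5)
Your proposal has a genuine gap: Step~1 never produces a usable comparison, and the ordering you aim for is not just hard to obtain but impossible. Any space or time translate of a transition solution is again a transition solution, since its data $u_0^1(\cdot-c)$ or $u_1(\cdot,T)$ lies in $\mathfrak{X}$. Two \emph{distinct} transition solutions can never be ordered at any time. If $V(\cdot,t_0)\le U(\cdot,t_0)$ with $V(\cdot,t_0)\not\equiv U(\cdot,t_0)$, strong comparison gives $(1+\epsilon)V(\cdot,t_1)\le U(\cdot,t_1)$ for some $t_1>t_0$ and $\epsilon>0$. The sharpness of $\lambda^*$ in Theorem~\ref{A} then forces $U$ to spread; this is exactly the argument that closes the proof of Lemma~\ref{yl3.4}. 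So $u_2(x+x_0,t)\le u_1(x,t+T_1)$, and likewise your second inequality, can hold only when the two solutions coincide.

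Your fallback with self-similar barriers on $\{u\le\theta\}$ cannot give an $O(1)$ bound either. It needs interior Dirichlet data that stay ordered for all $t$, and $u_1,u_2$ do not provide this. Replacing $\theta$ by $\theta\pm\varepsilon$ moves the front of (\ref{2.3}) by a multiple of $\varepsilon\sqrt t$, because by scaling $y_0$ varies like the boundary value to the power $(m-1)/2$. The interior deviation from $\theta$ is not $O(t^{-1/2})$: for $p>3$ it is at least of order $t^{-1/(p-1)}$ by Lemma~\ref{yl3.4}. Hence this route only recovers $o(\sqrt t)$ information of the kind in Theorem~\ref{B}.

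Step~2 is also unjustified as written. Theorem~\ref{B} carries an $o(\sqrt t)$ error, so it does not give $r_1(t+T)-r_1(t)=O(1)$. You would need a separate bound on $r_1'=-v_x(r_1(t),t)$.

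The missing idea is to use non-orderability as the tool itself. Combine it with a spatial translation and the zero-number diminishing property of Lou--Zhou, the one invoked in the proof of Lemma~\ref{yl3.4}, which works in the pressure variable up to the free boundaries. This is the kind of zero-number argument behind Du--Lou--Zhou's Lemma~4.1, to which the paper defers.

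Pick $c$ so large that the support of $u_0^1(\cdot-c)$ lies to the right of the support of $u_0^2$, and set $\eta(x,t)=u_1(x-c,t)-u_2(x,t)$. At $t=0$, $\eta$ has exactly one sign change, with pattern $[-\,+]$. It therefore has at most one sign change for all $t>0$. It has at least one, because $u_1(\cdot-c,\cdot)$ and $u_2$ are distinct transition solutions and so cannot be ordered. One checks that the pattern stays $[-\,+]$.

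This forces the right end of the combined support to belong to the translate. If $r_2(t_0)>r_1(t_0)+c$, then $\eta<0$ near both ends, so $\eta(\cdot,t_0)\le 0$, which is an ordering. Thus $r_2(t)\le r_1(t)+c$ for all $t>0$. Translating the other way and swapping the roles of the two solutions gives the remaining inequalities, with $C$ depending only on the initial supports. No time shifts, barriers, or asymptotics of $r_i$ are needed.
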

\noindent\textbf{Proof.} We omit the proof since it is quite similar to Lemma 4.1 in \cite{du2015nonlinear}. \hfill$\Box$

Thanks to Lemma \ref{yl2.2}, we only need to focus on initial data $\widetilde{u}_{0}\in \mathfrak{X}$ which is symmetrically decreasing with $j=1$. We now proceed to prove that the solution corresponding to this specific initial data possesses certain favorable properties.

\begin{yl}
If $\widetilde{u}_{0}\in \mathfrak{X}$ is symmetrically decreasing with
$j=1$ and the corresponding solution
$u(x,t;\widetilde{u}_{0})$ of $(\ref{1.1})$ undergoes a transition, then $u(x,t;\widetilde{u}_{0})$ is symmetrically decreasing and $u_{x}(x,t;\widetilde{u}_{0})<0$ in $\{(x,t)|x>0, u(x,t;\widetilde{u}_{0})>0\}$. Moreover, $u(0,t;\widetilde{u}_{0})>\theta$ for all $t>0$.\label{yl2.3}
\end{yl}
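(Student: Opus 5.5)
Three properties are needed: symmetry with monotone decay, strict decay $u_x<0$ on the positive part, and $u(0,t)>\theta$ for all $t>0$. We treat them in that order.

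\textbf{Symmetry and monotonicity.} Symmetry follows from uniqueness. Since $\widetilde u_0(-x)=\widetilde u_0(x)$ and the equation is invariant under $x\mapsto -x$, the function $u(-x,t)$ solves the same Cauchy problem, so $u(-x,t)=u(x,t)$. For monotonicity we use the reflection (Alexandrov moving plane) argument, which works for the porous medium equation through the comparison principle for weak sub- and supersolutions. Fix $\mu>0$ and set $w(x,t):=u(2\mu-x,t)$ on the half-line $x\ge\mu$. Then $w$ and $u$ solve the same equation on $\{x>\mu\}$ and agree on $x=\mu$. At $t=0$ we have $w(x,0)\ge u(x,0)$ for $x>\mu$, because $\widetilde u_0$ is symmetric and nonincreasing in $|x|$ and $|2\mu-x|\le |x|$ there. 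Comparison on the half-line $(\mu,\infty)$ then gives $u(2\mu-x,t)\ge u(x,t)$ for $x>\mu$. Letting $x\downarrow\mu$ over all $\mu>0$ shows that $u(\cdot,t)$ is nonincreasing on $[0,\infty)$.

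\textbf{Strict decay.} Inside the positivity set $\{u>0\}$ the equation is uniformly parabolic and $u$ is classical, at least where $u$ stays away from the non-smooth point of $f$; otherwise we argue via $v$. The function $z=u_x$ satisfies a linear parabolic equation $z_t=(mu^{m-1}z)_{xx}+f'(u)z$. We know $z\le 0$, and $z\not\equiv0$ on any time slice, since $u$ must vanish at $r(t)$. The strong maximum principle then gives $z<0$ wherever $x>0$ and $u>0$. An alternative is to apply the strong maximum principle to $u(2\mu-x,t)-u(x,t)$, which is positive on $\{x>\mu,\ u>0\}$, and then use a Hopf-type argument at $x=\mu$ to get $u_x(\mu,t)<0$.

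\textbf{The bound $u(0,t)>\theta$.} This is the delicate step, and we argue by contradiction. Suppose $u(0,t_0)\le\theta$ for some $t_0>0$. By the monotonicity already proved, $u(\cdot,t_0)\le\theta$ on all of $\mathbb R$. Since $f\equiv0$ on $[0,\theta]$, from time $t_0$ on $u$ solves the pure porous medium equation $u_t=(u^m)_{xx}$; the comparison principle keeps $u\le\theta$, so $f(u)=0$ throughout. The solution therefore conserves mass and decays like the Barenblatt profile, with $\|u(\cdot,t)\|_\infty\le Ct^{-1/(m+1)}\to0$. This is vanishing, contradicting transition, in which $u(0,t)\to\theta$.

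The remaining subtlety is that the strict inequality must hold at every $t>0$, not merely for large $t$. The contradiction argument above already covers every $t_0>0$. It does rely on the initial data satisfying $\max\widetilde u_0>\theta$; otherwise vanishing would occur immediately by the same reasoning, so this is automatic under transition.

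The main technical obstacle is justifying the moving-plane comparison and the strong maximum principle for a degenerate equation near the free boundary. We would handle it by applying the weak comparison principle on the half-line, which is valid for very weak solutions, and using the strong maximum principle only inside $\{u>0\}$, where the equation is nondegenerate.
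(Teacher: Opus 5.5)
Your proposal is correct and follows essentially the same route as the paper: symmetry from uniqueness, then a reflection comparison across a point $x^*>0$ followed by the Hopf lemma for strict decay, then a contradiction with transition for $u(0,t)>\theta$. Your ``alternative'' strict-decay argument is exactly the paper's, which works with the pressure $v=\frac{m}{m-1}u^{m-1}$ on $\{x\le x^*\}$ starting from the time $x^*$ enters the support; in the last step your version is in fact slightly more complete, since you explain why $u(\cdot,t_0)\le\theta$ forces vanishing (comparison with the constant $\theta$ gives $f(u)\equiv 0$, so the porous medium decay $\|u(\cdot,t)\|_\infty\le Ct^{-1/(m+1)}$ applies), whereas the paper only upgrades to $u<\theta$ via the strong maximum principle and then asserts $u\to 0$.
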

\noindent\textbf{Proof.} It is routine to check $u(-x,t;\widetilde{u}_{0})$ is also a solution of $(\ref{1.1})$ with the initial datum $\widetilde{u}_{0}\in\mathfrak{X}$. Hence, by the uniqueness of solutions, we immediately obtain the symmetry of
$u(x,t;\widetilde{u}_{0})$.

We now prove that $u_{x}(x^*,t;\widetilde{u}_{0})<0$ holds when $x^*>0$ and $u(x^{*},t;\widetilde{u}_{0})>0$. Since the transition happens, it follows that $r(t)\rightarrow\infty$ as $t\rightarrow\infty$, by {\bf Positivity persistence} (see \cite[Section 2.1]{lou2024convergence}), there exists $t_0\in [0,\infty)$ such that $(x^*,t)\in\{(x,t)|x>0, u(x,t;\widetilde{u}_{0})>0\}$ iff $t\ge t_0.$

Let
$$v(x,t):=\frac{m}{m-1}u^{m-1}(x,t;\widetilde{u}_{0})$$
and
$$\eta(x,t):=v(x,t)-v(2x^{*}-x,t), \ \ \ \ (x,t)\in E_{1}:=\{(x,t)|-\infty\leq x\leq x^{*},\ t\geq t_{0}\}.$$
Then there exist two funcitons $c_{1}=c_{1}(v_{x})$ and $c_{2}=c_{2}(m,f,v,v_{xx})$ such that $\eta$ satisfies
\begin{equation}
\left\{
\begin{split}
&\eta_{t}=(m-1)v\eta_{xx}+c_{1}\eta_{x}+c_{2}\eta,\qquad \, (x,t)\in E_{1},\\\
&\eta(x^{*},t)=0,\ \ \ \ \qquad\qquad\qquad\qquad\quad t\geq t_{0},\\
&\eta(x,t_{0})\geq0,\ \ \ \qquad\qquad\qquad\qquad\quad \ x\leq x^{*}.\\
\end{split}
\right.
\end{equation}
The maximum principle implies that
$$\eta(x,t)\geq0\ \text{in}\ E_{1},\ \eta_{x}(x^{*},t)\leq0\  \text{for}\  t\geq t_{0}.$$
Furthermore, since $v(x^{*},t)>0$ for $t>t_{0}$, so both $v$ and $\eta$ are classical
near the line $\{x=x^{*}, t>t_{0}\}$, we can employ the Hopf lemma to derive
 $$\eta_{x}(x^{*},t)=2v_{x}(x^{*},t)<0\ \ \text{for}\ t>t_{0}, $$
which implies $u_{x}(x^{*},t;\widetilde{u}_{0})<0$, for $t>t_0$ immediately.

At last, we show that $u(0,t;\widetilde{u}_{0})>\theta$ for all $t>0$. Assume by contradiction that there exists $t_{1}>0$ such that $u(0,t_{1};\widetilde{u}_{0})\le\theta$. The property $u_{x}(x,t;\widetilde{u}_{0})<0$ for $x>0$ implies that
$$u(x,t_{1};\widetilde{u}_{0})<\theta,\ \ \text{for} \ \ x\in[-r(t_{1}),r(t_{1})]\backslash\{0\}.$$
By the strong maximum principle we see that
$$u(x,t_{1};\widetilde{u}_{0})<\theta,\ \ \text{for} \ \ t>t_{1}\ \ \text{and}\ x\in[-r(t),r(t)],$$
and hence $u(x,t;\widetilde{u}_{0})\rightarrow0$ as $t\rightarrow\infty$ which is contradicts to the assumption that the transition happens. The proof is complete.\hfill$\Box$

\section{Proof of the main theorem}
\label{s3}
\qquad Throughout this section, we assume that $f$ is of combustion type and $(\ref{1.6})$ holds for $p>1$ and $\sigma>0$, and that $u(x,t)$ is the solution of $(\ref{1.1})$ with the initial datum $\widetilde{u}_{0}\in\mathfrak{X}$.
%This section is devoted to establishing the upper bound and lower bound of $r(t)$.

\subsection{Upper bound estimate of $r(t)$}

 \qquad By Lemma \ref{yl2.3}, we see that for each $t\geq0$, there exists a unique $\theta(t)\in(0,r(t))$ such that
 $$u(\theta(t),t)=\theta.$$
We intend to establish the estimate of $\theta(t)$, combining with a special self-similar solution of $(\ref{2.3})$, then by constructing the suitable sub-solution and  super-solution of $(\ref{1.1})$, we can obtain a lower and upper bound of $r(t)$ by comparison principle.\\
\indent
Given a constant $b\in(0,1-\theta)$, consider the initial value problem:
\begin{equation}
(Q^{m})''+f(Q)=0,\ \ Q(0)=\theta+b,\ \  Q'(0)=0\ \text{in}\ (0,\infty),\label{3.1}
\end{equation}
through a phase plane analysis, we know that $(\ref{3.1})$ has a unique solution $Q_{b}$ and numbers $l(b)$, $L(b)\in (0,\infty)$
 with $l(b)< L(b)$ such that
\begin{align*}
&Q_{b}(L(b))=0,\ Q_{b}(l(b))=\theta,\\
&Q'_{b}(x)<0\ \ \text{for}\ \ x\in (0,L(b)].
\end{align*}
For $l(b)$ and $L(b)$, we have the follwing lemma.
\begin{yl}
If $p>1$, $b\in(0,\sigma)$, then $l(b)$, $L(b)$ are strictly decreasing in $b$ and $l(b)$, $L(b)\rightarrow\infty $ if and only if  $b\rightarrow0$.\label{yl3.1}
\end{yl}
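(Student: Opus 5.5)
The approach is to pass to $w:=Q^{m}$ and use the first integral of (\ref{3.1}). Writing $F(s):=\int_0^s f(\tau)\,m\tau^{m-1}d\tau$, we have $w''+f(w^{1/m})=0$. Multiplying by $w'$ and integrating from $0$ gives
$$\tfrac12 (w')^2(x)=F(\theta+b)-F(Q(x)),\qquad\text{where } F(\theta+b)-F(s)=\int_s^{\theta+b} f(\tau)m\tau^{m-1}d\tau .$$
Since $f\equiv0$ on $[0,\theta]$, we get $F(s)=0$ for $s\le\theta$. For $b\in(0,\sigma)$, (\ref{1.6}) gives
$$G(b):=F(\theta+b)=m\int_\theta^{\theta+b}(\tau-\theta)^p\tau^{m-1}d\tau>0 .$$
This quantity is strictly increasing in $b$ and behaves like $\theta^{m-1}\frac{m}{p+1}b^{p+1}$ as $b\to0$. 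Because $Q'<0$ on $(0,L(b)]$, we can change variables $x\mapsto s=Q(x)$ using $dw=m s^{m-1}ds$. This yields the explicit formulas
$$l(b)=\int_\theta^{\theta+b}\frac{m s^{m-1}\,ds}{\sqrt{2\big(G(b)-F(s)\big)}},\qquad L(b)=l(b)+\frac{m\int_0^\theta s^{m-1}ds}{\sqrt{2G(b)}}=l(b)+\frac{\theta^m}{\sqrt{2G(b)}} .$$
One should also check that $Q$ indeed reaches $\theta$ and then $0$ in finite $x$. This holds because $w'$ is bounded away from zero, by $-\sqrt{2G(b)}$, once $Q\le\theta$; the same fact shows the phase plane claims are consistent.

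For monotonicity, the second term $\theta^m/\sqrt{2G(b)}$ is strictly decreasing in $b$ and tends to $\infty$ exactly when $b\to0$. It therefore suffices to show that $l(b)$ is strictly decreasing and to locate its limit. For $l(b)$, I substitute $s=\theta+b\,z$ with $z\in(0,1)$. Then
$$G(b)-F(\theta+bz)=m b^{p+1}\int_z^1 \zeta^p(\theta+b\zeta)^{m-1}d\zeta ,$$
so
$$l(b)=\int_0^1\frac{m(\theta+bz)^{m-1}\,b\,dz}{\sqrt{2m\,b^{p+1}\int_z^1\zeta^p(\theta+b\zeta)^{m-1}d\zeta}}=b^{\frac{1-p}{2}}\int_0^1\frac{m(\theta+bz)^{m-1}dz}{\sqrt{2m\int_z^1\zeta^p(\theta+b\zeta)^{m-1}d\zeta}} .$$
The integral is finite because near $z=1$ the inner integral is comparable to $1-z$. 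The prefactor $b^{(1-p)/2}$ is strictly decreasing since $p>1$, and it tends to $\infty$ as $b\to0$, while the remaining integral tends to a positive finite constant. This gives $l(b)\to\infty$ as $b\to0$, hence also $L(b)\to\infty$. Conversely, for $b$ bounded away from $0$ both formulas are bounded, so $l(b),L(b)\to\infty$ only if $b\to0$.

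The main obstacle is strict monotonicity of $l(b)$. The factor $(\theta+bz)^{m-1}$ in the numerator increases with $b$, and so does the one under the square root, so the $b$-dependence of the remaining integral is not obviously favorable. I would handle this by comparing ratios. For $\zeta\ge z$ we have
$$\frac{\theta+b\zeta}{\theta+bz}\le\frac{\theta+b}{\theta},$$
which shows the integrand's $b$-dependence is mild. The cleanest route is likely to absorb the weights by a different substitution, namely $s\mapsto$ the variable $\omega$ defined by $d\omega=m s^{m-1}ds$, i.e.\ $\omega=s^m$. Then
$$l(b)=\int_{\theta^m}^{(\theta+b)^m}\frac{d\omega}{\sqrt{2\int_\omega^{(\theta+b)^m}\tilde f(\eta)d\eta}},\qquad \tilde f(\eta)=f(\eta^{1/m}) .$$
This is the classical time map for $w''+\tilde f(w)=0$, and the needed monotonicity reduces to a time-map criterion. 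If $\tilde f(\theta^m+\cdot)$ satisfies $\tilde f(\theta^m+\lambda y)\ge\lambda^{q}\tilde f(\theta^m+y)$ for $\lambda>1$ with $q>1$ (superlinearity), the scaling argument above goes through. I would verify this using $f'(x)>0$ on $(\theta,\theta+\sigma)$ together with the explicit power form $(x-\theta)^p$. If this verification fails, I would fall back on differentiating $l(b)$ in $b$ directly and checking the sign via the standard formula for derivatives of time maps.
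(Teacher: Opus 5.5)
\emph{Gap: strict monotonicity of $l(b)$ is not proved, and cannot be proved without shrinking $\sigma$.}

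Several parts of your proposal are sound: the first integral, the formulas for $l(b)$ and $L(b)$, the rescaling $l(b)=b^{(1-p)/2}J(b)$ with $J(0)\in(0,\infty)$, and the reduction of everything to strict monotonicity of $l$. Your $L(b)-l(b)=\theta^m/\sqrt{2G(b)}$ is the correct expression, since $Q_b^m$, not $Q_b$, is affine on $[l(b),L(b)]$.

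The criterion you plan to verify ``from $f'>0$ and the power form'' is false on part of the range where you need it. Set $a=(\theta+b)^m-\theta^m$, $g(y)=(\theta^m+y)^{1/m}-\theta$ and $\phi=g^p$. Your time map then reads
\[
l(b)=\int_0^1\frac{dt}{\sqrt{2\int_t^1 a^{-1}\phi(a\tau)\,d\tau}} .
\]
What is needed is that $\phi(y)/y$ be strictly increasing, i.e.\ $p\,y\,g'(y)>g(y)$; your $\lambda^{q}$ condition is a quantitative form of this. Because $g$ is concave, the elasticity $p\,y\,g'(y)/g(y)$ decreases from $p$ at $y=0$ to $p/m$ as $y\to\infty$. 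For $p<m$ it drops below $1$ once $(\theta+b)/\theta$ is large enough, and then $l$ can genuinely increase in $b$. For example, take $\theta$ small, $p<m$ and $\theta+b$ close to $1$. The elasticity is then below $1$ on essentially all of $[0,a]$, so $a^{-1}\phi(a\tau)$ decreases in $a$ except on a negligible range of $\tau$; heuristically $l\approx c\,a^{(1-p/m)/2}$. Your fallback of differentiating the time map cannot help, because the sign of $l'(b)$ really changes.

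The missing idea is to shrink $\sigma$ first. This is legitimate, because $f(x)=(x-\theta)^p$ on $[\theta,\theta+\sigma]$ persists on any smaller interval. Since the elasticity tends to $p>1$ as $y\to0$, you can choose $\sigma$ so that it stays above $1$ on $(0,(\theta+\sigma)^m-\theta^m]$. For instance, $(\theta+\sigma)^m-\theta^m<\theta^m\frac{m(p-1)}{p(m-1)}$ suffices: $g(y)\le y\,g'(0)$ together with Bernoulli's inequality gives
\[
\frac{g(y)}{y\,g'(y)}\le\Big(1+\frac{y}{\theta^m}\Big)^{1-1/m}<\frac{2p-1}{p}\le p .
\]
This is exactly the restriction the paper imposes inside its proof. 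With it, $a^{-1}\phi(a\tau)$ is strictly increasing in $a$ for each $\tau\in(0,1]$. Your rescaled formula then gives $l(b')<l(b)$ for $b<b'$ directly, and strict decrease of $L$ follows from your formula for $L-l$.

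The paper instead compares the rescaled nonlinearities pointwise via $g'<h'$ to get local decrease, and then rules out interior local maxima. Once the restriction on $\sigma$ is in place, your global scaling is a cleaner way to finish.
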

\noindent\textbf{Proof.} First, we rewrite the equation  $(\ref{3.1})$ into the following form
\begin{align*}
&(Q_{b}^{m})''(Q_{b}^{m})' =-f(Q)(Q_{b}^{m})',
\end{align*}
integrating this identity over $[0,x]$, we have
\begin{equation}
(Q_{b}^{m})'(x)=-\bigg(2m\int_{Q_{b}(x)}^{b+\theta}r^{m-1}f(r)dr\bigg)^{\frac{1}{2}}.\label{3.2}
\end{equation}
Using the implicit function theorem, we have
\begin{equation}
l(b)=\int_{\theta}^{\theta+b}\frac{mQ^{m-1}dQ}{\left(2m\int_{Q}^{b+\theta}r^{m-1}f(r)dr\right)^{\frac{1}{2}}}\label{3.3}
\end{equation}
and
\begin{align*}
\int_{\theta}^{\theta+b}\frac{mQ^{m-1}dQ}{\left(2m\int_{Q}^{b+\theta}r^{m-1}f(r)dr\right)^{\frac{1}{2}}}&\leq\frac{\sqrt{m}(\theta+b)^{m-1}}{\sqrt{2}\theta^{\frac{m-1}{2}}}
\int_{\theta}^{\theta+b}\frac{dQ}{\sqrt{\int_{Q}^{b+\theta}(r-\theta)^{p}dr}}\\
&=\frac{\sqrt{m}(\theta+b)^{m-1}}{\sqrt{2}\theta^{\frac{m-1}{2}}}\int_{\theta}^{\theta+b}\frac{dQ}{\sqrt{\frac{1}{p+1}(b^{p+1}-(Q-\theta)^{p+1})}}\\
&=\frac{\sqrt{m}(\theta+b)^{m-1}}{\sqrt{2}\theta^{\frac{m-1}{2}}}\int_{0}^{b}\frac{ds}{\sqrt{\frac{1}{p+1}(b^{p+1}-s^{p+1})}}\\
&=\frac{\sqrt{m}(\theta+b)^{m-1}}{\sqrt{\frac{2}{p+1}}\theta^{\frac{m-1}{2}}b^{\frac{p+1}{2}}}b\int_{0}^{1}\frac{dx}{\sqrt{1-x^{p+1}}}\\
&= C_{p}(b)b^{-\frac{p-1}{2}},
\end{align*}
and similarly we also have
\begin{align*}
\int_{\theta}^{\theta+b}\frac{mQ^{m-1}dQ}{\left(2m\int_{Q}^{b+\theta}r^{m-1}f(r)dr\right)^{\frac{1}{2}}}\geq C_{p}'(b)b^{-\frac{p-1}{2}},
\end{align*}
where $C_{p}(b)$ and $C_{p}'(b)$ are bounded in $(0,\sigma)$ and $\lim_{b\rightarrow0}C_{p}(b)=\lim_{b\rightarrow0}C_{p}'(b)=C>0$. Thus $l(b)\rightarrow\infty $ if and only if  $b\rightarrow0$ and
\begin{align}
l(b)=C_{1}(b)b^{-\frac{p-1}{2}}, \label{3.4*}
\end{align}
where $C'_{p}(b)\leq C_{1}(b)\leq C_{p}(b)$.

Moreover,
\begin{equation*}
\begin{split}
l(b)&=\int_{\theta}^{\theta+b}\frac{mQ^{m-1}dQ}{\left(2m\int_{Q}^{b+\theta}r^{m-1}f(r)dr\right)^{\frac{1}{2}}}\\
&=\int_{\theta}^{\theta+b}\frac{dQ^{m}}{\left(2\int_{Q}^{\theta+b}(r-\theta)^{p}dr^{m}\right)^{\frac{1}{2}}}\\
&=\int_{\theta^{m}}^{\theta^{m}+a}\frac{ds}{\left(2\int_{s}^{\theta^{m}+a}(t^{1/m}-\theta)^{p}dt\right)^{\frac{1}{2}}}\\
&=\int_{0}^{a}\frac{dr}{\left(2\int_{r}^{a}[(i+\theta^{m})^{1/m}-\theta]^{p}di\right)^{\frac{1}{2}}}.\\
\end{split}
\end{equation*}
where $(\theta+b)^{m}=\theta^{m}+a$. Let $\epsilon>0$  is sufficiently small, we have
\begin{equation*}
\begin{split}
\int_{0}^{a+\epsilon}\frac{dr}{\left(2\int_{r}^{a+\epsilon}[(i+\theta^{m})^{1/m}-\theta]^{p}di\right)^{\frac{1}{2}}}
&=\int_{0}^{a+\epsilon}\frac{d\frac{a}{a+\epsilon}r}{\left(2\int_{r}^{a+\epsilon}\frac{a}{a+\epsilon}[(i+\theta^{m})^{1/m}-\theta]^{p}d\frac{a}{a+\epsilon}i\right)^{\frac{1}{2}}}\\
&=\int_{0}^{a}\frac{dt}{\left(2\int_{t}^{a}\left[\left(\frac{a}{a+\epsilon}\right)^{\frac{1}{p}}(s(1+\epsilon/a)+\theta^{m})^{1/m}-\theta\right]^{p}ds\right)^{\frac{1}{2}}}.
\end{split}
\end{equation*}
For simplicity, we set
\begin{equation*}
\begin{split}
g(s):=(s+\theta^{m})^{1/m}-\theta,\quad
h(s):=\left(\frac{a}{a+\epsilon}\right)^{\frac{1}{p}}(s(1+\epsilon/a)+\theta^{m})^{1/m}-\theta).
\end{split}
\end{equation*}
Then $g(0)=h(0)=0$, and
\begin{equation*}
\begin{split}
g'(s)-h'(s)&=\frac{1}{m}(s+\theta^{m})^{\frac{1}{m}-1}-\frac{1}{m}(1+\epsilon/a)^{1-\frac{1}{p}}(s(1+\epsilon/a)+\theta^{m})^{\frac{1}{m}-1}.
\end{split}
\end{equation*}
\begin{equation*}
\begin{split}
\left(\frac{s(1+\epsilon/a)+\theta^{m}}{s+\theta^{m}}\right)^{1-\frac{1}{m}}-\left(1+\frac{\epsilon}{a}\right)^{1-\frac{1}{p}}
&=\left(1+\frac{s}{s+\theta^{m}}\cdot\frac{\epsilon}{a}\right)^{1-\frac{1}{m}}-\left(1+\frac{\epsilon}{a}\right)^{1-\frac{1}{p}}\\
&\leq\left(1+\frac{\epsilon}{a+\theta^{m}}\right)^{1-\frac{1}{m}}-\left(1+\frac{\epsilon}{a}\right)^{1-\frac{1}{p}}\\
&\leq\left(1-\frac{1}{m}\right)\frac{\epsilon}{\theta^{m}}-\left(1-\frac{1}{p}\right)\frac{\epsilon}{a}+o(\epsilon),
\end{split}
\end{equation*}
if $\sigma$ satisfies $(\theta+\sigma)^{m}-\theta^{m}<\theta^{m}m(p-1)/p(m-1)$ and $\sigma<1-\theta$, then $a<\theta^{m}m(p-1)/p(m-1)$, we can derive that $g'(s)<h'(s)$, which implies for any $b\in(0,\sigma)$, there exists a constant $\delta_{b}$ such that for any $b'\in(b,b+\delta_{b})$, $l(b)>l(b')$. \\
\indent
Define $\kappa:=\text{sup}\{k\,|\,l(b)>l(b')\ \text{for}\ b'\in(b,b+k)\ \text{and}\ k<\sigma-b\}$, we claim that $\kappa=\sigma-b$. Suppose to the contrary, $\kappa<\sigma-b$, we have $l(b)=l(b+\kappa)$, then there exists $\lambda\in(b,b+\kappa)$ such that $\lambda$ is the local maximum point of $l(b)$. However, the above proof indicates that any $y\in(0,\sigma)$ can not be a local maximum point, which is a contradiction. Thus we have $\kappa=\sigma-b$, which means for any $b'\in(b,\sigma)$, $l(b)>l(b')$. From the arbitrariness of $b$, we obtain that $l(b)$ is strictly decreasing in $b\in(0,\sigma)$.\\
\indent
Let $x=l(b)$ in $(\ref{3.2})$, we obtain
\begin{equation}
\begin{split}
(Q^{m})'(l(b))&=-\bigg(2m\int_{\theta}^{b+\theta}r^{m-1}f(r)dr\bigg)^{\frac{1}{2}}\\
&=-\bigg(2m\xi^{m-1}\int_{\theta}^{b+\theta}f(r)dr\bigg)^{\frac{1}{2}}\\
&=-\bigg(2m\xi^{m-1}\frac{1}{p+1}b^{p+1}\bigg)^{\frac{1}{2}},
\end{split}\label{3.4}
\end{equation}
where $\xi\in(\theta,\theta+b)$.

As $Q_{b}(x)$ is a linear function over $[l(b), L(b)]$, we have
$$-Q'(l(b))=\frac{\theta}{L(b)-l(b)}=-\frac{(Q^{m})'(l(b))}{m\theta^{m-1}},$$
which is reorganized as follows
\begin{equation}
L(b)=l(b)+\frac{m\theta^{m}}{\left(2m\xi^{m-1}\frac{1}{p+1}\right)^{\frac{1}{2}}}b^{-\frac{p+1}{2}}=l(b)+C_{2}(b)b^{-\frac{p+1}{2}}\label{3.5}
\end{equation}
where $C_{2}(b):=m\theta^{m}/\left(2m\xi^{m-1}\frac{1}{p+1}\right)^{\frac{1}{2}}$. From (\ref{3.4}), we see that $(Q^{m})'(l(b))$ is decreasing in $b$ and $\theta/(Q^{m})'(l(b))$ is increasing in $b$, combining with $l(b)$ strictly decreasing in $b$, we have $L(b)$ is strictly decreasing in $b\in(0,\sigma)$ and $L(b)\rightarrow\infty $ if and only if $b\rightarrow0$.\ The proof is complete. \hfill$\Box$

For any fixed $t\geq1$, the sign-changing pattern of the function $u(x,t)-Q_{b}(x)$ is crucial for obtaining a refined estimate for $\theta(t)$.\\
\indent
We write $w_{b}(x,t):=u(x,t)-Q_{b}(x)$,
similarly to Lemma 4.5 of \cite{du2015nonlinear}, there exists $\delta>0$ such that for each $b\in(0,\delta)$,
$$w_{b}(0,1)>0>w_{b}(r(1),1),$$
and has a unique zero in $[0, r(1)]$, and the zero is nondegenerate.\\
\indent
Since $t\mapsto w_{b}(x,t),\ t\mapsto (w_{b})_{x}(x,t)$ and $t\mapsto r(t)$ are all uniformly continuous in $x$, for each $b\in(0,\delta)$ there exists $\epsilon_{0}>0$ small such that for each fixed $t\in[1-\epsilon_{0},1+\epsilon_{0}]$, $w_{b}(x,t)$ satisfies
$$w_{b}(0,t)>0>w_{b}(r(t),t).$$
and has a unique zero in $[0,r(t)]$, which is nondegenedete.\\
\indent
Define
\begin{align*}
&T^{b}_{1}:=\text{sup}\{s:w_{b}(0,t)>0\ \ \text{for}\  t\in[1-\epsilon_{0},s)\},\\
&T^{b}_{2}:=\text{sup}\{s:r(t)<L(b)\ \ \text{for}\  t\in[1-\epsilon_{0},s)\}.
\end{align*}
Clearly $T^{b}_{1},\ T^{b}_{2}\geq1+\epsilon_{0}$. Due to $r(t)\rightarrow\infty$ and $w_{b}(0,t)\rightarrow-b<0$ as $t\rightarrow\infty$, both $T^{b}_{1}$ and $T^{b}_{2}$ are finite. To simplify notation, we will write $w(x,t)$, $T_{1}$ and $T_{2}$ instead of $w_{b}(x,t)$, $T^{b}_{1}$ and $T^{b}_{2}$ when the dependence of $b\in(0,\delta)$ is not stressed.\\
\indent
The following lemmas provide all possible case for the intersection points between $u(x,t)$ and the solution of $(\ref{3.1})$ when transition case happens.
\begin{yl}Suppose $T_{1}<T_{2}$. Then
\begin{itemize}
\item[\rm(\rmnum{1})]  for $t\in(1,T_{1})$, $w(x,t)$ has a unique nondegenerate zero $x(t)$ in $(0,r(t))$ with
sign-changing pattern $[+0-]$ over $[0,r(t)]$, meaning
$$w(x,t)>0\  in\  [0,x(t)),\  w(x(t),t)=0,\  w(x,t)<0\  in\  (x(t),r(t)];$$
\item[\rm(\rmnum{2})]  $w(x,T_{1})$ has sign-changing pattern $[0-]$ over $[0,r(T_{1})]$, meaning
$$w(0,T_{1})=0\  and\  w(x,T_{1})<0\  in\  (0,r(T_{1})];$$
\item[\rm(\rmnum{3})] for $t\in(T_{1},T_{2})$, $w(x,t)$ has sign-changing pattern $[-]$ over $[0,r(t)]$, meaning
$$w(x,t)<0\  in\  [0,r(t)];$$
\item[\rm(\rmnum{4})]  $w(x,T_{2})$ has sign-changing pattern $[-0]$ over $[0,L(b)]$, meaning
$$w(x,T_{2})<0\   in\  [0,r(T_{2}))\  and\  w(r(T_{2}),T_{2})=0;$$
\item[\rm(\rmnum{5})]  for $t>T_{2}$, $w(x,t)$ has a unique nondegenerate zero $y(t)$ in $(0,L(b))$, with
sign-changing pattern $[-0+]$ over $[0,L(b)]$, meaning
$$w(x,t)<0\  in\  [0,y(t)),\  w(y(t),t)=0,\  w(x,t)>0\  in\  (y(t),L(b)];$$
\item[\rm(\rmnum{6})]  $\lim_{t\nearrow T_{1}}x(t)=0$, $\lim_{t\searrow T_{2}}y(t)=L(b)$.
\end{itemize}\label{yl3.2*}
\end{yl}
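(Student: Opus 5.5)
We compare $u$ with the stationary solution $Q_b$ on the moving interval $[0,\min\{r(t),L(b)\}]$, treating $w=u-Q_b$ with a zero-number (intersection-counting) argument. By Lemma \ref{yl2.3}, $u$ is symmetric and strictly decreasing in $x>0$ on its support. $Q_b$ is an even stationary solution of (\ref{1.1}) on $[0,L(b)]$ with $Q_b'<0$. Where both functions are positive, we pass to the pressure variables $v=\frac{m}{m-1}u^{m-1}$ and $\frac{m}{m-1}Q_b^{m-1}$. Their difference then satisfies a linear parabolic equation $\eta_t=(m-1)v\eta_{xx}+c_1\eta_x+c_2\eta$, as in the proof of Lemma \ref{yl2.3}. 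This equation is uniformly parabolic on compact subsets of $\{u>0,\,Q_b>0\}$, so the strong maximum principle and the Hopf lemma apply there.

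\textbf{Step 1: no new zeros from the ends.} For $t\in[1-\epsilon_0,T_2)$ we have $r(t)<L(b)$, so $w(r(t),t)=-Q_b(r(t))<0$ at the right end. At the left end, $w_x(0,t)=0$ by symmetry of both functions. The zero count of $w(\cdot,t)$ on $[0,r(t)]$ is therefore controlled by the zero-number principle (Angenent's argument, applicable because $u>0$ and the equation is nondegenerate on the relevant region). The number of zeros is nonincreasing in $t$, and it drops strictly whenever a degenerate zero occurs. Concretely, I would argue directly with the maximum principle on the region $\{0<x<x(t)\}$: there $w>0$, and a double zero created at an interior point would contradict the Hopf lemma. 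Since $w(\cdot,t)$ starts with exactly one nondegenerate zero near $t=1$, we get (i) on $(1,T_1)$. The zero $x(t)$ depends continuously on $t$ by the implicit function theorem.

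\textbf{Step 2: the transitions at $T_1$ and $T_2$.} At $T_1$, $w(0,T_1)=0$. If $w(\cdot,T_1)$ vanished somewhere else on $(0,r(T_1)]$, the strong maximum principle applied to $w\ge0$ on $[0,x(t)]$ would give a contradiction. Using $w_x(0,T_1)=0$ and Hopf at the symmetric point $x=0$ (via reflection), we obtain $w(\cdot,T_1)<0$ on $(0,r(T_1)]$, which is (ii), and $x(t)\to0$ as $t\nearrow T_1$. For $t\in(T_1,T_2)$, $w\le0$ at $t=T_1$ and $w<0$ on the boundary of the moving domain. The comparison principle (Theorem \ref{A}'s framework; $Q_b$ is a supersolution of (\ref{1.1}) on $\{x\le L(b)\}$) gives $w<0$, which is (iii). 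At $T_2$ the support reaches $r(T_2)=L(b)$, where $u=Q_b=0$. Strict negativity on $[0,r(T_2))$ persists by the strong maximum principle, giving (iv).

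\textbf{Step 3: after $T_2$.} For $t>T_2$ we have $r(t)>L(b)$ by monotonicity of $r$, so $w(L(b),t)=u(L(b),t)>0$, while $w(0,t)<0$ is preserved because $u(0,t)\to\theta<\theta+b$. A zero must therefore emerge from $x=L(b)$, and the zero number on $[0,L(b)]$ stays equal to one. Nondegeneracy follows again from zero-number theory: degenerate zeros occur only at isolated times and would reduce the count to zero, contradicting the sign change. Finally, $y(t)\to L(b)$ as $t\searrow T_2$ by continuity, since $w(\cdot,T_2)<0$ on compact subsets of $[0,L(b))$. This gives (v) and (vi).

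\textbf{Main obstacle.} The hard step is justifying zero-number and Hopf arguments near the free boundaries $r(t)$ and $L(b)$, where the equation degenerates. I would handle this by working on $[0,r(t)-\varepsilon]$ and using the strict sign of $w$ near the endpoint, which comes from the linear profile of $Q_b$ on $[l(b),L(b)]$ versus the compact-support behaviour of $u$. Near $x=L(b)$ at $t\approx T_2$ I would additionally use the monotonicity of $r(t)$ from Theorem \ref{B} to exclude re-entry of zeros through the right endpoint.
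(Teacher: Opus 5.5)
Your route (zero-number and strong-maximum-principle arguments for $w=u-Q_b$ on $[0,\min\{r(t),L(b)\}]$, cut off away from the degenerate endpoint) is what the paper means when it defers to Lemmas 4.6--4.8 of \cite{du2015nonlinear}. Parts (i)--(iv) go through essentially as you describe; for $t\le T_1<T_2$ one has $Q_b(r(t))\ge Q_b(r(T_1))>0$, so your cut-off near $r(t)$ is uniform.

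The genuine gap is in Step 3, at the instant $T_2$ when $r(t)$ crosses $L(b)$. Your claim that ``the zero number on $[0,L(b)]$ stays equal to one'' has nothing to start from. Since $w(L(b),T_2)=0$, monotonicity of the zero number on $[0,L(b)]$ cannot be initialised at $T_2$. Moreover, near $x=L(b)$ the linear equation for $\eta$ is singular, not merely degenerate. Indeed, $Q_b^m$ is affine on $[l(b),L(b)]$, so the pressure $\frac{m}{m-1}Q_b^{m-1}\sim C(L(b)-x)^{(m-1)/m}$ has infinite slope there and $c_1,c_2$ blow up. The sign information $w(L(b),t)>0$ for $t>T_2$ does not exclude several zeros, or zeros accumulating as $t\searrow T_2$, entering through $x=L(b)$. (That sign comes from strict monotonicity of $r$, which is \cite[Theorem 2.7]{lou2024convergence}, not Theorem \ref{B}.) Your remark that degenerate zeros would reduce the count presupposes that the count is already one.

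What is missing is a local transversality statement at $(L(b),T_2)$; this is exactly what the paper's appeal to the Darcy law supplies. Use $(u^m)_x=u\,v_x$ together with the fact that the pressure $v$ has bounded slope up to the free boundary ($v_x(r(t),t)=-r'(t)$). Then $(u^m)_x\to0$ as $(x,t)\to(L(b),T_2)$. On the other hand, by (\ref{3.2}) and $f\equiv0$ on $[0,\theta]$, $(Q_b^m)'\equiv-c$ on $[l(b),L(b)]$ with $c=\big(2m\int_\theta^{\theta+b}r^{m-1}f(r)\,dr\big)^{1/2}>0$. Hence $\partial_x(u^m-Q_b^m)\ge c/2$ on some box $[L(b)-\varepsilon,L(b)]\times[T_2,T_2+\tau]$. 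There $w$ has at most one zero, and it is simple, since $w_x=\partial_x(u^m-Q_b^m)/(mu^{m-1})>0$ at it.

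Combine this with $w<0$ on $[0,L(b)-\varepsilon]$ for $t\in[T_2,T_2+\tau]$ (continuity and (iv)) and $w(L(b),t)>0$. This gives exactly one nondegenerate zero for $t\in(T_2,T_2+\tau]$. Only then can the zero-number argument be run, on compact time intervals, over $[-(L(b)-\varepsilon_T),L(b)-\varepsilon_T]$. There the equation is uniformly parabolic, and $w>0$ near the ends because $u(L(b),\cdot)>0=Q_b(L(b))$.

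A smaller point: $w(0,t)<0$ for all $t>T_2$ does not follow from $u(0,t)\to\theta$, which only gives it for large $t$. It follows because a zero at $x=0$ is automatically degenerate by evenness. The count on the symmetric interval would then drop from $2$ to $0$, and $w(0,\cdot)$ would stay positive, contradicting $w(0,t)\to-b$.
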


\begin{yl}Suppose $T_{1}>T_{2}$. Then
\begin{itemize}
\item[\rm(\rmnum{1})]  for $t\in(1,T_{2})$, $w(x,t)$ has a unique nondegenerate zero $x(t)$ in $(0,r(t))$ with
sign-changing pattern $[+0-]$ over $[0,r(t)]$;
\item[\rm(\rmnum{2})]  $w(x,T_{2})$ has a unique nondegenerate zero $x(T_{2})$ in $(0,L(b))$ plus a second zero
at $x=L(b)$, and it has sign-changing pattern $[+0-0]$ over $[0,L(b)]$;
\item[\rm(\rmnum{3})] for $t\in(T_{2},T_{1})$, $w(x,t)$ has exactly two nondegenerate zeros $x(t)<y(t)$ in $[0,L(b)]$ with sign-changing pattern $[+0-0+]$ over $[0,L(b)]$;
\item[\rm(\rmnum{4})]  $w(x,T_{1})$ has a unique nondegenerate zero $y(T_{1})$ in $(0,L(b))$ plus a second zero
at $x=0$, and it has sign-changing pattern $[0-0+]$ over $[0,L(b)]$;
\item[\rm(\rmnum{5})]  for $t>T_{1}$, $w(x,t)$ has a unique nondegenerate zero $y(t)$ in $(0,L(b))$, with
sign-changing pattern $[-0+]$ over $[0,L(b)]$;
\item[\rm(\rmnum{6})]   $x(t)$ is a $C^{1}$ function for $t\in[1,T_{1}]$ with $\lim_{t\nearrow T_{1}}x(t)=0$, and $y(t)$ is a $C^{1}$
function for $t>T_{2}$ with $\lim_{t\searrow T_{2}}y(t)=L(b)$.
\end{itemize}\label{yl3.3*}
\end{yl}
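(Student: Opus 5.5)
The proof follows the zero-number approach of Lemma 4.6 in \cite{du2015nonlinear}. Since the zero-number diminishing property \cite{angenent1988zero} is not available for the degenerate equation, we replace it with a local argument. Near any zero of $w$ in the positivity set of $u$, the equation is uniformly parabolic, so nondegenerate zeros persist and move continuously. We then exclude the creation of new zeros by the maximum principle.

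First, $w=u-Q_b$ satisfies a linear equation
\[
w_t=(a(x,t)w)_{xx}+c(x,t)w .
\]
Here $a=\int_0^1 m(su+(1-s)Q_b)^{m-1}ds$ and $c=\int_0^1 f'(su+(1-s)Q_b)ds$ are bounded, and $a$ is positive wherever $u$ or $Q_b$ is positive. The function $Q_b$ solves the stationary equation on $[0,L(b)]$, with $Q_b'(0)=0$ and $Q_b(L(b))=0$. On $[1-\epsilon_0,T_2)$ we have $r(t)<L(b)$, so on $[0,r(t)]$ both the left end $x=0$ (by symmetry, $w_x(0,t)=0$) and the right end $x=r(t)$ (where $w=-Q_b<0$) are controlled. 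The pattern $[+0-]$ holds at $t=1-\epsilon_0$. For $t<\min(T_1,T_2)$ we have $w(0,t)>0>w(r(t),t)$.

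Second, we show that no additional zeros appear. Suppose the number of sign changes increased at a first time $t^*$. Then a new zero would be created at a degenerate zero inside the interior region where $u>0$, and this region is uniformly parabolic. Near such a point $w$ solves a classical linear parabolic equation, so the standard local zero-number argument (Sturm/Angenent applied on a small box) shows that degenerate zeros can only disappear, never form. Hence for $t<\min(T_1,T_2)$, $w$ has exactly one zero $x(t)$, which is nondegenerate. The implicit function theorem then makes $x(t)$ a $C^1$ function. This gives (i) in both lemmas.

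We now split into the two cases.

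In Lemma \ref{yl3.2*}, at $T_1$ we have $w(0,T_1)=0$. Since $w(0,\cdot)>0$ before $T_1$, the zero $x(t)$ must tend to $0$; otherwise an interval near $0$ with $w>0$ would persist, and the Hopf lemma at $x=0$ (with $w_x(0)=0$ by symmetry) would contradict $w(0,T_1)=0$ unless $w\le 0$ nearby. This yields (ii) and the first limit in (vi). For $t\in(T_1,T_2)$, the strong maximum principle applied on the region $\{0<x<r(t)\}$, with $w<0$ at the lateral boundary $r(t)$, gives $w<0$; this is (iii). At $T_2$ we have $r(T_2)=L(b)$, and since $u=0=Q_b$ there, (iv) follows. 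After $T_2$, $u>0$ on $[0,L(b)]$ because $r$ is nondecreasing, and $w(L(b),t)=u>0$. So exactly one zero $y(t)$ emerges from $L(b)$; it moves continuously, and again no further zeros are created. Because $w(0,t)<0$ for all $t>T_2$, we obtain (v) and (vi). To see that $w(0,t)$ stays negative, note that if it returned to $0$, a degenerate zero would have to be created at the symmetric endpoint, which the Hopf argument excludes.

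Lemma \ref{yl3.3*} is handled the same way with the order of events reversed. At $T_2$ the support reaches $L(b)$ while $w(0)>0$. A second zero $y(t)$ enters from $L(b)$, giving the pattern $[+0-0+]$. At $T_1$ the zero $x(t)$ reaches $0$, and afterwards only $y(t)$ remains.

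\textbf{Main obstacle.} The key difficulty is justifying zero-number non-increase without a global Angenent theorem for the degenerate equation. I would handle it by showing that all zeros stay a positive distance from the free boundary, using $w<0$ near $r(t)$ for $t<T_2$ and $u>0$ on $[0,L(b)]$ for $t>T_2$. This confines every zero to a compact subset of the positivity set, where the equation is classical and the standard theory applies.
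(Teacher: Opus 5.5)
\textbf{Gap: the birth of the zero $y(t)$ at the free-boundary point $(L(b),T_2)$.}

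Most of your argument works. Confining zeros to $\{u\ge\varepsilon\}$ (using $w=-Q_b(r(t))<0$ near $r(t)$), applying local Angenent boxes there, and using the even extension at $x=0$ together give (i), the nondegeneracy of $x(t)$, and the behaviour at $T_1$.

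What is missing is every assertion tied to $t=T_2$: the pattern $[+0-0]$ in (ii), the absence of extra zeros near $L(b)$ in (iii), the uniqueness and nondegeneracy of $y(t)$ for $t$ close to $T_2$, and $\lim_{t\searrow T_2}y(t)=L(b)$. You only assert these ("exactly one zero $y(t)$ emerges from $L(b)$").

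Your proposed fix, keeping all zeros a positive distance from the free boundary, holds on $[1-\epsilon_0,T]$ for $T<T_2$ and on $[T_2+\varepsilon,\infty)$. It fails exactly as $t\searrow T_2$, because $y(t)$ is born at $(L(b),T_2)$, where $u=0$. That point is worse than merely degenerate. Since $f\equiv0$ on $[0,\theta]$, $Q_b^m$ is linear on $[l(b),L(b)]$. Hence $Q_b=(c(L(b)-x))^{1/m}$ with $c=-(Q_b^m)'(l(b))>0$, and $Q_b'\to-\infty$. So the linear equation for $w$ has degenerate diffusion and unbounded coefficients in every neighbourhood of $(L(b),T_2)$.

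As a result, no version of Angenent's theorem applies near that point. Monotonicity of the zero number on $(T_2,\infty)$ gives no upper bound, because its limit as $t\searrow T_2$ is not a priori finite. Parity only tells you that an odd number of zeros appears near $L(b)$. Nothing you wrote excludes $w(\cdot,T_2)\ge 0$ somewhere just left of $L(b)$, which would destroy (ii).

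\textbf{What supplies it: Darcy-law information at the boundary.} This is what the paper's appeal to ``the zero number argument and the Darcy law'' provides: Lou--Zhou's zero-number lemma for the porous medium equation, inserted into Lemmas 4.6--4.8 of Du--Lou--Zhou. Concretely, for the pressure $v=\frac{m}{m-1}u^{m-1}$ use the standard Lipschitz bound $|v_x|\le C$ for $t\ge 1-\epsilon_0$. At any zero of $w(\cdot,t)$ in $[l(b),L(b))$, use $u_x=\frac1m v_x u^{2-m}$ and $mQ_b^{m-1}Q_b'=-c$ to get
\[
w_x=\frac{Q_b^{1-m}}{m}\bigl(c+Q_b v_x\bigr)>0\qquad\text{whenever } Q_b<c/C .
\]
Hence in a strip $(L(b)-\delta,L(b))$ every zero is nondegenerate with positive slope, so there is at most one.

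At $t=T_2$, $v(\cdot,T_2)\le C(L(b)-x)$ gives $u\le C'(L(b)-x)^{1/(m-1)}=o\big((L(b)-x)^{1/m}\big)$. So $w(\cdot,T_2)<0$ near $L(b)$, which settles (ii).

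For $t>T_2$ close to $T_2$, continuity gives $w(L(b)-\delta,t)<0$. Also $w(L(b),t)=u(L(b),t)>0$; this needs $r$ strictly increasing, not just nondecreasing as you wrote. Together these give exactly one zero in the strip, and it tends to $L(b)$. With this local analysis at $(L(b),T_2)$ added, your interior zero-number argument completes the proof.
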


\begin{yl}Suppose $T_{1}=T_{2}$. Then
\begin{itemize}
\item[\rm(\rmnum{1})]  for $t\in(1,T_{2})$, $w(x,t)$ has a unique nondegenerate zero $x(t)$ in $(0,r(t))$ with
sign-changing pattern $[+0-]$ over $[0,r(t)]$;
\item[\rm(\rmnum{2})]   $w(x,T_{1})$ has sign-changing pattern $[0-0]$ over $[0,L(b)]$;
\item[\rm(\rmnum{3})] for $t>T_{1}=T_{2}$, $w(x,t)$ has a unique nondegenerate zero $y(t)$ in $(0,L(b))$
with sign-changing pattern $[-0+]$ over $[0,L(b)]$;
\item[\rm(\rmnum{4})]  $\lim_{t\nearrow T_{1}}x(t)=0$, $\lim_{t\searrow T_{1}}y(t)=L(b)$.
\end{itemize}\label{yl3.4*}
\end{yl}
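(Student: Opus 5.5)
The three lemmas (Lemmas \ref{yl3.2*}--\ref{yl3.4*}) are three cases of a single zero-counting argument, and I would prove them together. The tool is the number of sign changes of $w=u-Q_b$ on the moving interval $J(t):=[0,\min\{r(t),L(b)\}]$, in the spirit of Lemma 4.5 of \cite{du2015nonlinear}. Zero-number diminishing in the sense of \cite{angenent1988zero} is not directly available for the degenerate equation. It is available, however, on any region where $u$ and $Q_b$ are both positive, which is the interior of $J(t)$. Both functions solve $(\cdot)_t=(\cdot^m)_{xx}+f(\cdot)$ there, since $Q_b$ is stationary. Writing $u^m-Q_b^m=a(x,t)w$ with $a=\int_0^1 m(su+(1-s)Q_b)^{m-1}ds>0$, the difference satisfies a linear uniformly parabolic equation
$$w_t=(aw)_{xx}+c(x,t)w$$
on compact subsets of $\{0<x<\min(r(t),L(b))\}$, and $c$ is bounded because $f$ is Lipschitz. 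Lemma \ref{yl2.3} gives symmetry, so $w_x(0,t)=0$. Hence $x=0$ acts as a Neumann boundary, and the zero number there can only drop.

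\textbf{Right endpoint.} For $t<T_2$ we have $r(t)<L(b)$, so on $[r(t),L(b)]$ we get $w=-Q_b<0$. This means $w<0$ near the right end of $J(t)$. For $t>T_2$ we have $r(t)>L(b)$ (by monotonicity of $r$ from Theorem \ref{B}), so $w(L(b),t)=u(L(b),t)>0$. At $t=T_2$ we get $w(L(b),T_2)=0$.

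\textbf{Left endpoint.} By definition of $T_1$, $w(0,t)>0$ for $t<T_1$. I also need $w(0,t)<0$ for $t>T_1$. The easy half is that $w(0,\cdot)$ cannot stay positive forever, since $w(0,t)\to-b$. The harder half is that $w(0,\cdot)$ cannot become positive again after $T_1$. A return to positivity would create an additional sign change near $0$, contradicting the fact that the zero number is nonincreasing.

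\textbf{Assembling the cases.} At $t=1$ there is exactly one nondegenerate zero. The zero number is nonincreasing, it drops strictly whenever a multiple zero appears, and zeros can enter or leave only through the endpoints at the times $T_1$ and $T_2$. Combining these facts:
\begin{itemize}
\item[\rm(a)] If $T_1<T_2$, the zero $x(t)$ exits through $x=0$ at $T_1$, giving pattern $[0-]$. Then $w<0$ until $T_2$, when a new zero enters from $L(b)$, and the pattern becomes $[-0+]$.
\item[\rm(b)] If $T_1>T_2$, a second zero enters at $L(b)$ first, giving $[+0-0+]$ on $(T_2,T_1)$. Then $x(t)$ exits at $0$.
\item[\rm(c)] If $T_1=T_2$, both events happen at the same time.
\end{itemize}
Nondegeneracy of the zeros follows because a degenerate zero would force a strict drop in the zero number, which is impossible while the boundary signs are fixed and the count equals the minimal number compatible with them. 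The implicit function theorem then makes $x(t)$ and $y(t)$ of class $C^1$. The limits in (vi) and (iv) follow from continuity together with $w(0,T_1)=0$ and $r(T_2)=L(b)$.

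\textbf{Main obstacle.} The hard step is justifying the zero-number argument up to the moving right boundary. Near $x=r(t)$ the equation degenerates because $u\to0$. For $t<T_2$ this is harmless, since $w=-Q_b<0$ strictly on a neighborhood, so there I would work on $[0,r(t)-\epsilon]$. For $t\ge T_2$ the relevant right boundary is the fixed point $L(b)$, where $Q_b=0$ but $u>0$, so $u$ is still nondegenerate and the boundary is again harmless. The delicate moment is $t=T_2$ itself. There I would use the linearity of $Q_b$ on $[l(b),L(b)]$ together with $u_x<0$ from Lemma \ref{yl2.3} to show that the zero at $L(b)$ is simple and enters transversally.
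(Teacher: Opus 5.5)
Your overall route is the one the paper intends. It counts zeros of $w=u-Q_b$ on a symmetric interval, treats $x=0$ as a Neumann point, and handles the degenerate right end separately; the paper gives no more than a reference to Lemmas 4.6--4.8 of \cite{du2015nonlinear} plus the Darcy law.

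\textbf{Main gap: the corner $(L(b),T_2)$.} This is the step you flag as the main obstacle, and your proposed mechanism there rests on a false fact. $Q_b$ is not linear on $[l(b),L(b)]$. Since $0\le Q_b\le\theta$ there, $f(Q_b)=0$ and $(Q_b^m)''=0$, so it is $Q_b^m$ that is linear; the sentence asserting linearity of $Q_b$ in the proof of Lemma~\ref{yl3.1} is a slip. Consequences:
\begin{itemize}
\item $Q_b\sim c\,(L(b)-x)^{1/m}$ and $Q_b'(L(b)^-)=-\infty$, so the boundary zero is not simple in any $C^1$ sense.
\item $u_x<0$ cannot give transversality, because it only makes $w_x=u_x-Q_b'$ smaller.
\end{itemize}

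\textbf{What works: a flux comparison.} Set $W:=u^m-Q_b^m$, which has the sign of $w$. On $[l(b),L(b)]$, $(Q_b^m)'$ equals the negative constant $-\bigl(2m\int_\theta^{\theta+b}r^{m-1}f(r)\,dr\bigr)^{1/2}$. Meanwhile $(u^m)_x=u\,v_x\to0$ at the free boundary, because the pressure gradient $v_x$ is bounded; this is where the Darcy law $r'(t)=-v_x(r(t),t)$ enters. Hence $W_x>0$ on a fixed box $[L(b)-\epsilon,L(b)]\times[T_2-\delta,T_2+\delta]$. This gives the two facts your argument needs and does not yet have.
\begin{itemize}
\item First, $w<0$ on $[L(b)-\epsilon,r(t)]$ uniformly for $t\le T_2$. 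So $x(t)$ stays in the fixed interval $[0,L(b)-\epsilon]$, whose endpoint values are negative near $T_1=T_2$, and the zero count there is valid up to and across $T_1$. At $T_1$ only the multiple zero at $x=0$ survives, which gives (ii) and $x(t)\to0$. Just after $T_1$ the count in the symmetric picture drops from $2$ to $0$.
\item Second, for $t>T_2$ the boundary layer contains exactly one zero. It is nondegenerate and tends to $L(b)$.
\end{itemize}
Without this localization the two simultaneous events of the case $T_1=T_2$ cannot be separated. Your item (c) ("both events happen at the same time") is therefore only a restatement of the claim.

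\textbf{Smaller gap: $w(0,t)<0$ for all $t>T_1$.} Monotonicity of the zero number forbids $w(0,\cdot)$ from turning positive while a negative region remains, since that would need four zeros in the symmetric picture. It does not forbid $\pm y(t)$ from colliding at $x=0$, with $w(\cdot,t)>0$ on all of $[-L(b),L(b)]$ afterwards. That is a drop in the count, which the theorem allows, so your statement that a return to positivity ``creates an additional sign change'' fails in this scenario.

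You need a separate comparison. For $t>T_2$ one has $u>0$ on $[-L(b),L(b)]$, and $Q_b$ is a stationary solution there with $Q_b(\pm L(b))=0<u(\pm L(b),t)$. The maximum principle on this cylinder then shows that $u(\cdot,t_*)\ge Q_b$ at one time forces $u(0,t)\ge\theta+b$ for all $t\ge t_*$. This contradicts $u(0,t)\to\theta$.

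A minor point: the strict increase of $r$, needed for $r(t)>L(b)$ when $t>T_2$, comes from \cite[Theorem 2.7]{lou2024convergence}, not from Theorem~\ref{B}.
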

\indent
 By the zero number argument and  the Darcy law for porous media equations with reactions, the above lemmas can be proved by the same arguments as Lemmas 4.6-4.8 in \cite{du2015nonlinear} (see \cite[Proposition 5.7]{lou2024convergence}).\\
\indent
Now we are ready to present the estimate of $\theta(t)$ in the following lemma.
\begin{yl}
Let $u(x,t)$ be the solution of $(\ref{1.1})$ with the initial datum $\widetilde{u}_{0}\in\mathfrak{X}$. Then there exists $G>0$ such that
\begin{equation}\label{est of theta}\theta(t)\leq Gt^{\frac{1}{2}-\frac{1}{p+1}}  \mbox{~for~all~large~} t.
\end{equation}
\label{yl3.2}
\end{yl}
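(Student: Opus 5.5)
We follow the scheme of Du--Lou--Zhou for $m=1$, using the intersection-pattern Lemmas \ref{yl3.2*}--\ref{yl3.4*} together with the asymptotics of $L(b)$ from Lemma \ref{yl3.1}. The key observation is that for large $t$ the point $\theta(t)$ is controlled by the zero $y(t)$ of $w_b(\cdot,t)=u(\cdot,t)-Q_b$. We then choose $b$ as a function of $t$ so that this zero stays below $L(b)$, and read off $\theta(t)\lesssim L(b)\sim b^{-\frac{p+1}{2}}$.

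\emph{Step 1: locating $\theta(t)$ when $t>\max\{T_1^b,T_2^b\}$.} In all three cases of Lemmas \ref{yl3.2*}--\ref{yl3.4*}, for such $t$ the function $w_b(\cdot,t)$ has the pattern $[-0+]$ on $[0,L(b)]$. In particular $u(x,t)<Q_b(x)$ on $[0,y(t))$ and $u(x,t)>Q_b(x)$ on $(y(t),L(b)]$. Since $Q_b(l(b))=\theta$ and both $u(\cdot,t)$ and $Q_b$ are decreasing for $x>0$, we can compare positions. If $l(b)\ge y(t)$, then $u(l(b),t)\ge \theta$, so $\theta(t)\ge l(b)$. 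For the upper bound we argue as follows. On $[y(t),L(b)]$ we have $u(\cdot,t)\ge Q_b>0$, so $u(\cdot,t)$ is positive up to $L(b)$. Hence either $\theta(t)\le L(b)$ directly, or $u(\cdot,t)>\theta$ on $[0,L(b)]$. The second alternative is excluded on $[0,y(t))$, where $u<Q_b$; on $[l(b),L(b)]$ we have $Q_b\le\theta$, so $u(\cdot,t)<\theta$ at points of $[l(b),y(t))$ whenever $y(t)>l(b)$. In either subcase we obtain $\theta(t)\le L(b)$ as long as $t$ exceeds the corresponding crossing time. It therefore suffices to prove
\[
\theta(t)\le L(b)\qquad\text{for all } t\ge T^b:=\max\{T_1^b,T_2^b\}.
\]
At least we need this at $t=T^b$ plus a monotonicity in $b$; the cleanest route is to use it for all $t\ge T^b$, which the lemmas provide.

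\emph{Step 2: choosing $b=b(t)$.} Here we need an upper bound on $T^b$ in terms of $b$. By Theorem \ref{B}, $r(t)\ge c\sqrt t$ for large $t$, so $T_2^b\le C L(b)^2$. For $T_1^b$, note that $u(0,t)\to\theta$ by Theorem \ref{A}, so $w_b(0,t)=u(0,t)-\theta-b<0$ once $u(0,t)-\theta<b$. This requires a rate for $u(0,t)-\theta$. Our first choice is to avoid needing a rate: since $T_1$ and $T_2$ are ordered arbitrarily, it suffices to know that at time $T_2^b$ the point $\theta(t)$ already satisfies the required bound. Then, given large $t$, we pick $b$ with $T_2^b\approx t$, which by $r(t)\sim 2y_0\sqrt t$ and $L(b)\sim C_2 b^{-\frac{p+1}{2}}$ (from \eqref{3.5}) means $b\sim t^{-\frac{1}{p+1}}$. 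On the other hand, $\theta(t)\le L(b)$ would only give $\theta(t)\lesssim\sqrt t$, which is too weak. The refined bound must instead come from $l(b)\sim b^{-\frac{p-1}{2}}$ and the fact that the part of $[0,L(b)]$ where $u\ge\theta$ lies where $Q_b\ge\theta$ up to the intersection.

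So the intended argument is this. For $t\ge T^b$, on $[0,y(t))$ we have $u<Q_b$, hence $\theta(t)\le \max\{l(b),y(t)\}$, and we must show $y(t)$ is not beyond $l(b)$ much. We then choose $b$ so that $T^b\le t$, namely $b\approx c\,t^{-\frac{1}{p+1}}$ (making $L(b)\le r(t)$ and $b\ge u(0,t)-\theta$). This gives
\[
\theta(t)\le l(b)+O(1)\le C\,b^{-\frac{p-1}{2}}\sim C\,t^{\frac{p-1}{2(p+1)}}=C\,t^{\frac12-\frac{1}{p+1}},
\]
which is exactly the claimed exponent. This confirms the choice $b\sim t^{-\frac1{p+1}}$, for which $L(b)\sim \sqrt t$ matches $r(t)$ and $l(b)$ gives the bound.

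\emph{Main obstacle.} The hardest step is controlling $T_1^b$, equivalently showing $u(0,t)-\theta\le C t^{-\frac{1}{p+1}}$, and excluding the configuration where $\theta(t)$ lies near $y(t)\gg l(b)$. We would try to get both from the pattern lemmas. If $T_1^b>T_2^b$, then $w(0,t)>0$ and the pattern $[+0-0+]$ forces $u(\cdot,t)\ge Q_b$ near $0$ and $u<Q_b$ on the middle interval, which contains $l(b)$ if $x(t)<l(b)$. Thus $\theta(t)\le l(b)$ there as well, and $T_1$ need not be bounded. When $x(t)\ge l(b)$, we would use $x(t)\to0$ as $t\nearrow T_1$ and continuity of $\theta(t)$ to conclude. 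Making this case analysis airtight, so that in every configuration $\theta(t)\le l(b(t))$ for $b(t)\sim t^{-1/(p+1)}$ with $L(b(t))\le r(t)$, is where most of the work lies.
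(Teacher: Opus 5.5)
\textbf{Gap: the key inequality is never proved.} Your scaling is right: $L(b)\sim b^{-\frac{p+1}{2}}$ matched with $r(t)\sim 2y_0\sqrt t$ forces $b\sim t^{-\frac1{p+1}}$, and then $l(b)\sim b^{-\frac{p-1}{2}}=t^{\frac12-\frac1{p+1}}$. But the only nontrivial step, $\theta(t)\le l(b)$ for a suitable $b=b(t)$, is never established. You leave it as the ``main obstacle'', and the route you sketch for it fails in three places.

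\begin{itemize}
\item[(a)] For $t>\max\{T_1^b,T_2^b\}$ the pattern $[-0+]$ does \emph{not} give $\theta(t)\le\max\{l(b),y(t)\}$. If $y(t)<l(b)$, then $u(l(b),t)>Q_b(l(b))=\theta$, so $\theta(t)>l(b)\ge y(t)$. The pattern only gives the equivalence $\theta(t)<l(b)\iff y(t)>l(b)$. So controlling $y(t)$ is the same problem, and nothing in your argument controls it; the ``$+O(1)$'' has no source.
\item[(b)] Arranging $T_1^b\le t$ with $b\sim t^{-\frac1{p+1}}$ requires the decay rate $u(0,t)-\theta\le Ct^{-\frac1{p+1}}$. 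This rate is not available: the paper proves only a lower bound on $u(0,t)-\theta$, and only for $p>3$.
\item[(c)] In the case $T_2^b<t<T_1^b$ with $x(t)\ge l(b)$, you have $u(\cdot,t)>Q_b$ on $[0,l(b)]$. That is exactly the configuration $\theta(t)>l(b)$ that must be excluded at that time. Moving to other times via $x(t)\to0$ as $t\nearrow T_1$ changes the relation between $b$ and $t$, and says nothing about the given time.
\end{itemize}

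\textbf{Missing idea: slide $b$ at one fixed time.} Start from your ``first choice'': for large $t_0$, let $b_0$ satisfy $L(b_0)=r(t_0)$, i.e.\ $t_0=T_2^{b_0}$.
\begin{itemize}
\item[1.] Suppose $\theta(t_0)>l(b_0)$. Then $u(l(b_0),t_0)>\theta$, while $w_{b_0}(\cdot,t_0)<0$ just left of $L(b_0)$. So $w_{b_0}(\cdot,t_0)$ has a zero in $(l(b_0),L(b_0))$. Among Lemmas \ref{yl3.2*}--\ref{yl3.4*} this only happens in Lemma \ref{yl3.3*}(ii), which gives $u(\cdot,t_0)>Q_{b_0}$ on $[0,l(b_0)]$.
\item[2.] Let $b_*$ be the supremum of those $b>b_0$ with $u(\cdot,t_0)\ge Q_b$ on $[0,l(b)]$.
\item[3.] Suppose $b_*<u(0,t_0)-\theta$. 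Then $T_2^{b_*}<t_0<T_1^{b_*}$, because $L(b_*)<r(t_0)$ and $w_{b_*}(0,t_0)>0$. Hence $w_{b_*}(\cdot,t_0)$ has the pattern $[+0-0+]$ with nondegenerate zeros. Also $u(l(b_*),t_0)>u(l(b_0),t_0)>\theta$. So a touching zero in $[0,l(b_*)]$ is impossible, and $u(\cdot,t_0)>Q_{b_*}$ strictly on $[0,l(b_*)]$. This strict ordering persists for slightly larger $b$, contradicting maximality.
\item[4.] Hence $b_*=u(0,t_0)-\theta$, i.e.\ $t_0=T_1^{b_*}$. Lemma \ref{yl3.3*}(iv) then gives $w_{b_*}(\cdot,t_0)<0$ just to the right of $x=0$. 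This contradicts $u(\cdot,t_0)\ge Q_{b_*}$ on $[0,l(b_*)]$.
\end{itemize}
This yields $\theta(t)\le l(b(t))$ with $L(b(t))=r(t)$, and needs no rate for $u(0,t)-\theta$. Only after this does your scaling give $\theta(t)\le Gt^{\frac12-\frac1{p+1}}$.
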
\vspace{-2em}
\noindent\textbf{Proof.} Let $\sigma_{1}\in(0,\sigma)$ be arbitrarily given. By Lemma \ref{yl3.1}, for any $y\in(L(\sigma_{1}),\infty)$, there
exists a unique $b\in(0,\sigma_{1})$ such that $L(b)=y$. Since $r(t)\rightarrow\infty$ as $t\rightarrow\infty$ and $r(t_{2})>r(t_{1})$ for $t_{2}>t_{1}$ (see \cite[Theorem 2.7]{lou2024convergence}), there exists
$T_{0}>0$ such that
$$r(t)>L(\sigma_{1})\ \ \text{for}\ \ t\geq T_{0}.$$
Hence for any $t\geq T_{0}$, there exists a unique $b(t)\in(0,\sigma_{1})$ such that
$$r(t)=L(b(t))\ \ \text{for}\ \ t\geq T_{0}.$$
In order to estimate $\theta(t)$, we first claim that $\theta(t)\leq l(b(t))$ for all large $t$.

%By studying the intersection point of $u(x,t)$ and $Q_{b(t)}(x)$, we have the following claim.
Since $\lim_{t\rightarrow\infty}u(x,t)=\theta$ locally uniformly in $\mathbb{R}^{1}$, there exists $T_{0}'>T_{0}$ such that
\begin{align}
u(0,t)<\theta+\sigma_{1}\ \ \text{for}\ \ t\geq T_{0}'. \label{3.6}
\end{align}
It is sufficient to prove that
$$\theta(t)\leq l(b(t))\ \ \text{for}\ \ t\geq T_{0}'.$$
\indent
Suppose claim is not true, there exists $t_{0}>T_{0}'$ such that $\theta(t_{0})>l(b(t_{0})):=l(b_{0})$.
Since $r(t_{0})=L(t_{0})$, By Lemmas \ref{yl3.2*}-\ref{yl3.4*}, we have
\begin{align*}
u(x,t_{0})-Q_{b_{0}}(x)<0\ \ \text{for}\ \ x<L(b_{0}) \text{ but close to}\ L(b_{0}).
\end{align*}
Note that $\theta(t_{0})> l(b_{0})$ and $u_{x}(x,t)<0$ in $(0,r(t))$, we have
\begin{align}
u(t_{0},l(b_{0}))-Q_{b_{0}}(l(b_{0}))>0. \label{3.8*}
\end{align}
Hence $u(t_{0},x)-Q_{b_{0}}(x)$ has a zero in $(l(b_{0}),L(b_{0}))$, which can only happen in (ii) of Lemma \ref{yl3.3*}. Then $u(t_{0},x)-Q_{b_{0}}(x)$ has the sign-changing pattern of $[+0-0]$, combining with $(\ref{3.8*})$, we have
\begin{align*}
&u(t_{0},x)-Q_{b_{0}}(x)>0,\ \ \text{for}\ 0\leq x<l(b_{0}),\\
&u(t_{0},0)-b_{0}-\theta>0.
\end{align*}
\indent Define $
A:=\{b>b_{0}:u(x,t_{0})\geq Q_{b}(x)\geq \theta\  \text{in} \ [0,l(b)]\},\
b_{*}:=\text{sup}A.$
%First we show that set $A$ is nonempty.
 Note that
\begin{align}
u(x,t_{0})>Q_{b_{0}}(x)>\theta,\ \ \text{for}\ 0\leq x\leq l(b_{0}),\label{3.7}
\end{align}
and since $l(b)$ is strictly decreasing in $b$ (due to Lemma \ref{yl3.1}) and $Q_{b}$ is continuous with respect to $b$, we obtain
$$u(x,t_{0})>Q_{b_{2}}(x)>\theta,\ \ \text{for}\ 0\leq x\leq l(b_{2})$$
for $b_{2}>b_{0}$ but close to $b_{0}$. This implies that $A$ is nonempty.
Since for all $b\in A$, $u(0,t_{0})\geq b+\theta$, combining with (\ref{3.6}), we have
$$ b\leq u(0,t_{0})-\theta\leq\sigma_{1}.$$
Then $b_{*}\leq u(0,t_{0})-\theta\leq\sigma_{1}$. Therefore $b_{*}$ is well-defined.

 Next we prove that
$b_{*}=u(0,t_{0})-\theta.$
Suppose that $b_{*}<u(0,t_{0})-\theta$. Then $Q_{b_{*}}(0)<u(0,t_{0})$, that is, $w_{b_{*}}(0,t_{0})>0$. Due to $b_{*}>b_{0}$ and $L(b)$ is strictly decreasing in $b$, we have
$$L(b_{*})<L(b_{0})=L(b(t_{0}))=r(t_{0}).$$
From Lemmas \ref{yl3.2*}-\ref{yl3.4*}, we get
$$T_{2}^{b_{*}}<t_{0}<T_{1}^{b_{*}},$$
and $u(x,t_{0})-Q_{b_{0}}(x)$ has the sign-changing pattern of $[+0-0+]$. \\
\indent
Since $l(b)$ is strictly decreasing in $b$, then for any $b\in(b_{0},b_{*})$, it follows that
$$u(x,t_{0})\geq Q_{b}(x),\ \ \text{for all}\ x\in[0,l(b_{*})]\subset[0,l(b)].$$
Letting $b\nearrow b^{*}$ gives
$$u(x,t_{0})\geq Q_{b_{*}}(x),\ \ \text{for}\ 0\leq x\leq l(b_{*}).$$
 Note that
$$Q_{b_{*}}(l(b_{*}))=Q_{b_{0}}(l(b_{0}))=\theta\ \ \text{and}\ \ u_{x}(x,t)<0.$$
Combining with $(\ref{3.8*})$, we obtain
$$u(l(b_{*}),t)-Q_{b_{*}}(l(b_{*}))>u(l(b_{0}),t)-Q_{b_{0}}(l(b_{0}))>0.$$
Since $u(x,t_{0})-Q_{b_{*}}(x)$ has the sign-changing pattern $[+0-0+]$, we have
$$u(x,t_{0})>Q_{b_{*}}(x),\ \ \text{for}\ 0\leq x\leq l(b_{*}).$$
By continuity of $Q_{b}$ with respect to $b$, we can find $\epsilon>0$ such that
$$u(x,t_{0})>Q_{b_{*}+\epsilon}(x),\ \ \text{for}\ 0\leq x\leq l(b_{*}+\epsilon),$$
which contradicts the definition of $b_{*}$. Consequently, we have $b_{*}=u(0,t_{0})-\theta$, that is, $u(0,t_{0})=Q_{b_{*}}(0)$ and
$$u(x,t_{0})\geq Q_{b_{*}}(x),\ \ \text{for}\ 0\leq x\leq l(b_{*}).$$
However, by Lemmas \ref{yl3.2*}-\ref{yl3.4*}, we have that if $u(x,t_{0})-Q_{b_{*}}(x)$ has a tangency at $x=0$, then there exists sufficiently small $\delta>0$ such that $u(x,t_{0})<Q_{b_{*}}(x)$ for  $x\in(0,\delta)$, which derives a contradiction, which complete the proof of the claim.

Now, based on the above claim, we can further establish \eqref{est of theta}.
We recall Theorem \ref{B}, (\ref{3.5}) and Lemma \ref{yl3.1}, i.e.,
\begin{align}
&r(t)=2y_{0}\sqrt{t}[1+o(1)]\ \ \text{as}\ \ t\rightarrow\infty,\label{3.8}\\
&r(t)=L(b(t))=C_{1}(b(t))b(t)^{-\frac{p-1}{2}}+C_{2}(b(t))b(t)^{-\frac{p+1}{2}},\label{3.9}
\end{align}
$$b\rightarrow0\ \ \text{ if and only if}\ \ L(b)\rightarrow\infty.$$
Therefore, due to $r(t)\rightarrow\infty$ as $t\rightarrow\infty$, by Lemma \ref{yl3.1}, we obtain
$$b(t)\rightarrow0\ \ \text{as}\ \ t\rightarrow\infty.$$
Since $\lim_{t\rightarrow\infty}C_{1}(b(t))=I>0$ and $\lim_{t\rightarrow\infty}C_{2}(b(t))=I'>0$, from (\ref{3.8}) and (\ref{3.9}), we have
$$2y_{0}\sqrt{t}[1+o(1)]=Cb(t)^{-\frac{p+1}{2}}[1+o(1)]\ \ \text{as}\ \ t\rightarrow\infty,$$
consequently, we obtain
$$b(t)=O(t^{-\frac{1}{p+1}})\ \ \text{as}\ \ t\rightarrow\infty.$$
This and (\ref{3.4*}) yield
$$l(b(t))=O(t^{\frac{1}{2}-\frac{1}{p+1}})\ \ \text{as}\ \ t\rightarrow\infty.$$
Therefore there exists $G>0$ such that
$$\theta(t)\leq l(b(t))\leq Gt^{\frac{1}{2}-\frac{1}{p+1}}\ \ \text{for all large}\ t,$$
This proves our lemma.\hfill$\Box$

With the above preparations, we can proceed to estimate an upper bound for $r(t)$.
\begin{yl}
If $p>1$ and let $u(x,t)$ be the solution of (\ref{1.1}) with the initial datum $\widetilde{u}_{0}\in\mathfrak{X}$ , we have
$$r(t)\leq 2y_{0}\sqrt{t}+Mt^{\frac{1}{2}-\frac{1}{p+1}}\ \ \text{as} \ \ t\rightarrow\infty.$$
where $y_{0}>0$ is the zero of $\xi(y)$.\label{yl3.3}
\end{yl}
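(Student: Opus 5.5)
We use Lemma \ref{yl3.2} and compare $u$ on the region to the right of the level point $\theta(t)$ with a shifted copy of the self-similar solution $e(x,t)$ of Lemma \ref{yl2.1*}. For $x\ge\theta(t)$ we have $u\le\theta$, so $f(u)=0$ there because $f\equiv0$ on $[0,\theta]$. Hence, on $\{\theta(t)<x<r(t)\}$, $u$ solves the pure porous medium equation $u_t=(u^m)_{xx}$, with boundary value $u(\theta(t),t)=\theta$ and $u=0$ at the free boundary $r(t)$. Our plan is to dominate $u$ there by
$$\bar u(x,t):=e\big(x-\gamma(t),t+T\big),$$
where $\gamma(t)$ is a nondecreasing shift satisfying $\gamma(t)\ge\theta(t)$. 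The support of $\bar u$ then ends at $\gamma(t)+2y_0\sqrt{t+T}$, and choosing $\gamma(t)\approx G' t^{\frac12-\frac1{p+1}}$ gives the claim with $M$ roughly $G'$.

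\textbf{Step 1 (shift).} By Lemma \ref{yl3.2}, $\theta(t)\le Gt^{\frac12-\frac1{p+1}}$ for $t\ge T_*$. Set $\gamma(t):=G(t+T)^{\frac12-\frac1{p+1}}$, which is increasing because $p>1$. Then $\gamma(t)\ge\theta(t)$ for $t\ge T_*$.

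\textbf{Step 2 (supersolution check).} Since $e_x\le0$ and $\gamma'\ge0$,
$$\bar u_t-(\bar u^m)_{xx}=-\gamma'(t)e_x\ge0,$$
so $\bar u$ is a supersolution of $u_t=(u^m)_{xx}$ on its positivity set. It also extends by zero as a weak supersolution across its free boundary, because the Darcy law is still satisfied with a nonnegative extra speed $\gamma'$. On the left boundary $x=\gamma(t)$ we have $\bar u=\theta$, while $u(\gamma(t),t)\le\theta$, since $\gamma(t)\ge\theta(t)$ and $u$ is decreasing in $x>0$ by Lemma \ref{yl2.3}.

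\textbf{Step 3 (initial ordering).} At the starting time $T_*$ we need $u(x,T_*)\le\bar u(x,T_*)$ for $x\ge\gamma(T_*)$. It suffices that $\bar u\ge\theta\ge u$ on $[\gamma(T_*),r(T_*)]$, that is, $2y_0\sqrt{T_*+T}$ must exceed $r(T_*)$ by a margin. To make $\bar u$ close to $\theta$ on that interval, we take $T$ large. Equivalently, we may enlarge $\gamma$ by adding a constant $r(T_*)$, which only changes the final constant $M$. The comparison principle for the porous medium equation on the domain $\{x>\gamma(t)\}$ then gives $u\le\bar u$ for $t\ge T_*$. Therefore
$$r(t)\le\gamma(t)+C+2y_0\sqrt{t+T}\le 2y_0\sqrt t+Mt^{\frac12-\frac1{p+1}}$$
for large $t$, using $\sqrt{t+T}-\sqrt t=O(t^{-1/2})$.

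\textbf{Main obstacle.} The delicate point is justifying the comparison on a domain whose left boundary moves, with a degenerate equation and a free boundary. We plan to handle it with the weak comparison principle for very weak sub- and supersolutions on the noncylindrical domain $\{x>\gamma(t)\}$, as used in \cite{lou2024convergence}. If $\gamma$ is not smooth enough for this, we would instead compare in the pressure variable $v$, where the equation is uniformly parabolic in the interior.
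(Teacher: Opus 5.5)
Your proposal is correct and is essentially the paper's proof. The paper also compares $u$ with the shifted self-similar profile $e(x-h(t),t)$, $h(t)=Mt^{\frac12-\frac1{p+1}}$, using $-h'(t)e_x\ge 0$, and it chooses $M\ge\max\{K,\,T_0^{\frac1{p+1}-\frac12}r(T_0)\}$ so that $h(t)\ge\theta(t)$ and $h(T_0)\ge r(T_0)$; this is exactly your option of enlarging the shift past $r(T_*)$. Rely on that option in Step 3 rather than your first suggestion: $\bar u<\theta$ strictly to the right of $\gamma(t)$, so the condition $\bar u\ge\theta$ on $[\gamma(T_*),r(T_*)]$ can only hold when that interval is empty.
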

\noindent\textbf{Proof.}  In view of Lemma \ref{yl3.2}, we can find some $K>0$ and $T_{0}>1$ such that
$$\theta(t)\leq Kt^{\frac{1}{2}-\frac{1}{p+1}}\ \ \text{for}\ \  t\geq T_{0}.$$
Fix $T_{0}$ and choose $M>0$ so that
$$M\geq \textup{max}\{K,\ T_{0}^{\frac{1}{p+1}-\frac{1}{2}}r(T_{0})\}.$$
Define
\begin{equation*}
\begin{split}
&h(t):=Mt^{\frac{1}{2}-\frac{1}{p+1}},\\
&\overline{u}(x,t):=e(x-h(t),t),\quad\ h(t)\leq x\leq h(t)+\rho(t),\ \ \ t\geq T_{0},
\end{split}
\end{equation*}
where $e(x,t)$ is designated in (\ref{2.3*}) and $\rho(t)=2y_{0}\sqrt{t}$.

We see that $0=u(x,T_{0})\leq \overline{u}(x,T_{0})$ for $h(T_{0})\leq x\leq h(T_{0})+\rho(T_{0})$. Set $E_{1}:=\{(x,t)|h(t)\leq x\leq h(t)+\rho(t),\ T_{0}\leq t\leq \infty\}$. According to Lemma \ref{yl2.1*}, we have $e_{x}(x-h(t),t)=\frac{1}{2m\sqrt{t}}\xi^{\frac{1}{m}-1}\xi'((x-h(t))/2\sqrt{t})<0$,  and noting that $\xi(x)$ is a solution of (\ref{2.2}) together with $h'(t)>0$, a direct calculation yields
\begin{equation*}
\begin{split}
\overline{u}_{t}-(\overline{u}^{m})_{xx}&=e_{t}(x-h(t),t)-h'(t)e_{x}(x-h(t),t)-(e^{m})_{xx}(x-h(t),t)\\
&=-h'(t)e_{x}(x-h(t),t)\\
&\geq0=f(\overline{u})
\end{split}
\end{equation*}
for $(x,t)\in E_{1}$. Consequently, $\overline{u}(x,t)$ is a supersolution of (\ref{1.1}) in $E_{1}$.
By comparison principle (see, for example, \cite{aronson1982stabilization}), we have
$$r(t)\leq h(t)+\rho(t)\leq2y_{0}\sqrt{t}+Mt^{\frac{1}{2}-\frac{1}{p+1}}\ \ \text{for} \ \ t\geq T_{0}.$$
The proof is complete. \hfill$\Box$

\subsection{Lower bound estimate of $r(t)$ for $p>3$}

$\quad$ In this subsection, we first establish a lower bound for $u(0, t)$, and subsequently utilize the self-similar solution of (\ref{2.3}) to construct a suitable sub-solution, thereby deriving the lower bound for $r(t)$.

\begin{yl}
Assume that $p>3$ and let $u(x,t)$ be the solution of $(\ref{1.1})$ with the initial datum $\widetilde{u}_{0}\in\mathfrak{X}$.
Then there exist constants $C>0$ and $T>0$ such that
$$u(0,t)\geq \theta+C(t+T)^{-\frac{1}{p-1}}\ \ \text{for all} \ \ t\geq T.$$\label{yl3.4}
\end{yl}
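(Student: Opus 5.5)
The idea is to put a subsolution of $v_t=(v^m)_{xx}+(v-\theta)^p$ under $u$, built so that it stays above $\theta$ near $x=0$ and decays towards $\theta$ at the rate $t^{-1/(p-1)}$. This is the porous-medium analogue of the self-similar solution of $v_t=v_{xx}+v^p$ used in \cite{lei2018refined}. The heuristic is the ODE $z'=(z-\theta)^p$ run backward in time. Since $u$ converges to $\theta$ from above at $x=0$ (Lemma \ref{yl2.3}), the excess $u(0,t)-\theta$ decays. But the reaction term $(u-\theta)^p$, together with the fact that the diffusion only drains mass over a region of size $\sqrt t$, should prevent the excess from decaying faster than $t^{-1/(p-1)}$. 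For $p>3$ one has $\frac{1}{p-1}<\frac12$, which is what makes the reaction dominate the diffusive loss on the relevant scale.

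First I pass to the region near $x=0$. Here $u\to\theta$ in $C^2_{loc}$ by Theorem \ref{A}, so for $t\ge T_1$ and $|x|\le R(t)$ the equation is uniformly parabolic, and $f(u)=(u-\theta)^p$ whenever $\theta\le u\le\theta+\sigma$. Since $\theta(t)\to\infty$, I can take $R(t)\to\infty$ with $u>\theta$ on $|x|<\theta(t)$.

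The comparison function I would try is
$$\underline u(x,t)=\theta+(t+T)^{-\frac{1}{p-1}}\,\phi\!\left(\frac{x}{\sqrt{t+T}}\right),$$
where $\phi\ge0$ is compactly supported on $[-a,a]$, e.g. $\phi(y)=\varepsilon\cos^2(\pi y/2a)$ or a small bump. I then compute $\underline u_t-(\underline u^m)_{xx}-(\underline u-\theta)^p$. Write $s=t+T$ and $\beta=\frac1{p-1}$. The time derivative produces
$$s^{-\beta-1}\left(-\beta\phi-\tfrac{y}{2}\phi'\right).$$
The reaction produces $s^{-\beta p}\phi^p=s^{-\beta-1}\phi^p$. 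The diffusion, linearized at $\theta$, produces
$$m\theta^{m-1}s^{-\beta-1}\phi''+O(s^{-2\beta-1}).$$
So to leading order the requirement is the self-similar inequality
$$m\theta^{m-1}\phi''+\tfrac y2\phi'+\beta\phi+\phi^p\ge0 .$$
Because $\beta<\frac12$ when $p>3$, this can be arranged with $\phi$ positive on $(-a,a)$. Concretely, the operator $\phi\mapsto m\theta^{m-1}\phi''+\frac y2\phi'+\beta\phi$ on a large interval has principal eigenvalue close to $\beta-\frac12<0$. I would instead work on $\mathbb R$ with a Gaussian-type weight, where the linear operator has a positive solution decaying like $|y|^{-2\beta}$. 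The nonlinear term $\phi^p$, chosen with $\phi$ not too small, then compensates near the boundary. This construction of $\phi$ is the step I expect to be the main obstacle. If the direct approach fails, my fallback is the Lei et al.-type self-similar profile: solve the ODE with $\phi(0)$ large enough that $\phi^p$ beats the negative eigenvalue contribution, and let $\phi$ hit zero at some finite $a$. The error $O(s^{-2\beta-1})$ from the nonlinearity of $u^m$ and from the change to the pressure variable is absorbed by choosing $T$ large.

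Once $\phi$ is fixed, I compare $\underline u$ with $u$ on the moving domain
$$\{|x|\le a\sqrt{t+T},\ t\ge T_1\}.$$
On the lateral boundary $\underline u=\theta<u$ provided $a\sqrt{t+T}<\theta(t)$. This requires the a priori lower bound $\theta(t)\ge c\sqrt t$. I would extract that from Theorem \ref{B} and Lemma \ref{yl2.3}, using $r(t)\sim2y_0\sqrt t$ and the subsolution $e$ from Lemma \ref{yl2.1*}; if necessary, $a$ can be shrunk, and the ODE step adjusted accordingly. At the initial time $T_1$ the ordering holds after enlarging $T$, since $u(\cdot,T_1)>\theta$ on a fixed interval. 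The comparison principle then gives $u(0,t)\ge\theta+\phi(0)(t+T)^{-1/(p-1)}$, which is the claim of Lemma \ref{yl3.4} with $C=\phi(0)$.
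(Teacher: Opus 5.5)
\textbf{There is a genuine gap: the comparison domain.} The profile $\phi$ is fine: shooting from a large $\phi(0)$ gives a positive profile vanishing at a finite $a$, and a small margin in the ODE absorbs the $O(s^{-2\beta-1})$ errors. What fails is your lateral condition $\underline u=\theta\le u$ on $|x|=a\sqrt{t+T}$, which needs $\theta(t)\ge a\sqrt{t+T}$ for all large $t$.

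That is false. By Lemma \ref{yl2.3} the set $\{u>\theta\}$ is $(-\theta(t),\theta(t))$, and Lemma \ref{yl3.2} gives $\theta(t)\le Gt^{\frac12-\frac1{p+1}}=o(\sqrt t)$. Theorem \ref{B} cannot give you $\theta(t)\ge c\sqrt t$; it points the other way. Beyond $\theta(t)$, $u$ is a pure PME solution with boundary value $\theta$, so a region $\{u\ge\theta\}$ of width $c\sqrt t$ would push the front ahead of $2y_0\sqrt t$ by a multiple of $\sqrt t$. Shrinking $a$ does not help, since every fixed $a>0$ fails.

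Your initial ordering fails for the same reason. Enlarging $T$ lowers the amplitude of $\underline u(\cdot,T_1)-\theta$, but it spreads the support over $[-a\sqrt{T_1+T},a\sqrt{T_1+T}]$, which no longer fits inside $\{u(\cdot,T_1)>\theta\}$.

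Heuristically, working on the true width $\ell(t)\le\theta(t)$ does not rescue the idea either. With zero excess at the ends, diffusion removes excess at rate $\sim\ell^{-2}\gg t^{-1}$. Meanwhile the target height $t^{-\frac1{p-1}}$ lies far below the local mountain-pass level $\ell^{-\frac2{p-1}}\gtrsim t^{-\frac1{p+1}}$. A subsolution confined there at that height would therefore decay faster than any power of $t$.

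\textbf{The missing idea is to use that $u$ is critical.} The paper places nothing under $u$. It builds an auxiliary solution $w$ of (\ref{1.1}) on all of $\mathbb R$ with $w>\theta$ everywhere, trapped between two explicit functions:
\begin{itemize}
\item the subsolution $\underline w=\theta+k(at+x^2)^{-\frac1{p-1}}$;
\item the supersolution $\overline w=\theta+\frac12t^{-\frac1{p-1}}\varphi(|x|/\sqrt t)$, where $\varphi$ is the positive Haraux--Weissler profile decaying like $y^{-2/(p-1)}$.
\end{itemize}
The profile $\varphi$ exists precisely because $p>3$, i.e.\ $\frac1{p-1}<\frac12$. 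This is the Fujita-supercritical regime, where diffusion rather than reaction wins near $\theta$; your own negative top eigenvalue $\beta-\frac12$ says exactly this, so your reading of the role of $p>3$ is reversed. Consequently $w(0,t)-\theta\gtrsim t^{-\frac1{p-1}}$ while $w\to\theta$.

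An intersection-number argument then gives $u(0,t)>w(0,t+T)$ for all $t$. At a first time of equality, evenness and zero-number diminishing force $u$ below $w$ on the whole support of $u$ from then on. Hence $(1+\epsilon_0)u(\cdot,t_2)\le w(\cdot,t_2+T)$ for some $t_2$. But $(1+\epsilon_0)u(\cdot,t_2)$ is a spreading datum by Theorem \ref{A}, while $w\to\theta$, which is a contradiction.

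In this argument no lateral boundary condition is needed. It is the threshold property of $u$, not a subsolution, that keeps $u(0,t)$ above the slowly decaying level.
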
\vspace{-2em}
\noindent\textbf{Proof.}
We first consider the self-similar solution of the following nonlinear heat equation\\
\begin{equation}
n_{t}=m\theta^{m-1}n_{xx}+|n|^{p-1}n,\label{3.11}
\end{equation}
and its associated problem
\begin{equation}
\left\{
\begin{split}
&\frac{\varphi}{p-1}+\frac{y}{2}\varphi'+m\theta^{m-1}\varphi''+|\varphi|^{p-1}\varphi=0, \ \ y>0,\\
&\varphi(0)=\gamma,\ \varphi'(0)=0.
\end{split}
\right.\label{3.12*}
\end{equation}
Following the ideas of Haraux and Weissler \cite{haraux1982non}, one can prove that for
$$p>3\ \ \text{and}\ \ 0<\gamma<\bigg(\frac{p-3}{2p-2}\bigg)^{\frac{1}{p-1}},$$
the above problem has a unique solution $\varphi\in C^{3}(\mathbb{R})$, which satisfies $\varphi(y)=\varphi(-y)$, $\varphi(y)>0$ for all $y\in[0,\infty)$ and
\begin{equation}
\lim_{y\rightarrow\infty}y^{2/(p-1)}\varphi(y)=A>0.\label{3.13*}
\end{equation}
We rewrite the first equation in $(\ref{3.12*})$ in the form
$$\Big(m\theta^{m-1}\varphi'(y)e^{y^{2}/(4m\theta^{m-1})}\Big)'=-\Big(\frac{\varphi(y)}{p-1}+\varphi^{p}(y)\Big)e^{y^{2}/(4m\theta^{m-1})},$$
then we clearly have
\begin{equation}
\begin{split}
\lim_{y\rightarrow\infty}\frac{\varphi'(y)}{y^{-2/(p-1)-1}}
&=\lim_{y\rightarrow\infty}\frac{e^{y^{2}/(4m\theta^{m-1})}\varphi'(y)}{e^{y^{2}/(4m\theta^{m-1})}y^{-2/(p-1)-1}}\\
&=\lim_{y\rightarrow\infty}\frac{(e^{y^{2}/(4m\theta^{m-1})}\varphi'(y))'}{(e^{y^{2}/(4m\theta^{m-1})}y^{-2/(p-1)-1})'}\\
&=\lim_{y\rightarrow\infty}\frac{1}{m\theta^{m-1}}\cdot\frac{-\left(\frac{1}{p-1}\varphi(y)+\varphi^{p}(y)\right)e^{y^{2}/(4m\theta^{m-1})}}{e^{y^{2}/(4m\theta^{m-1})}\Big(\frac{1}{2m\theta^{m-1}}y^{-2/(p-1)}-\frac{p+1}{p-1}y^{-2/(p-1)-2}\Big)}\\
&=\lim_{y\rightarrow\infty}\frac{-2}{p-1}\frac{\varphi(y)}{y^{-2/(p-1)}}\\
&=\frac{-2}{p-1}A.\\
\end{split}
\label{3.14*}
\end{equation}
Moreover, differentiating the equation in $(\ref{3.12*})$ gives
$$\Big(m\theta^{m-1}\varphi''(y)e^{y^{2}/(4m\theta^{m-1})}\Big)'=-\Big(\frac{p+1}{2p-2}+p\varphi^{p-1}(y)\Big)\varphi'(y)e^{y^{2}/(4m\theta^{m-1})}$$
then by $\lim_{y\rightarrow\infty} \varphi(y)=0,$
\begin{equation}
\begin{split}
\lim_{y\rightarrow\infty}\frac{\varphi''(y)}{y^{-2/(p-1)-2}}
&=\lim_{y\rightarrow\infty}\frac{e^{y^{2}/(4m\theta^{m-1})}\varphi''(y)}{e^{y^{2}/(4m\theta^{m-1})}y^{-2/(p-1)-2}}\\
&=\lim_{y\rightarrow\infty}\frac{(e^{y^{2}/(4m\theta^{m-1})}\varphi''(y))'}{(e^{y^{2}/(4m\theta^{m-1})}y^{-2/(p-1)-2})'}\\
&=\lim_{y\rightarrow\infty}\frac{1}{m\theta^{m-1}}\cdot\frac{-\Big(\frac{p+1}{2p-2}+p\varphi^{p-1}(y)\Big)\varphi'(y)e^{y^{2}/(4m\theta^{m-1})}}{e^{y^{2}/(4m\theta^{m-1})}\Big(\frac{1}{2m\theta^{m-1}}y^{-2/(p-1)-1}-\frac{2p}{p-1}y^{-2/(p-1)-3}\Big)}\\
&=\lim_{y\rightarrow\infty}-\frac{p+1}{p-1}\frac{\varphi'(y)}{y^{-2/(p-1)-1}}
\lim_{y\rightarrow\infty}\frac{\frac{1}{2}+\frac{p(p-1)}{p+1}\varphi^{p-1}(y)}{\frac{1}{2}-m\theta^{m-1}\frac{2p}{y^2(p-1)}}\\
&=\frac{p+1}{p-1}\cdot\frac{2}{p-1}A.\\
\end{split}
\label{3.15*}
\end{equation}
\indent Define
$$n(x,t)=t^{-1/(p-1)}\varphi(|x|/\sqrt{t}),$$
then $n(x,t)$ satisfies (\ref{3.11}) for $t>0$, $x\in\mathbb{R}$ and there exists a time $T_{1}>0$ such that for all $t>T_{1}$, $n(x,t)<\sigma$. \\
\indent
Set
$$\overline{w}(x,t)=\theta+\frac{1}{2}n(x,t).$$
For $t>T_{1}$, from $(\ref{3.12*})$ and the mean value theorem
\begin{align*}
\overline{w}_{t}-(\overline{w}^{m})_{xx}-f(\overline{w})&=\frac{1}{2}n_{t}-\frac{m}{2}\left(\theta+\frac{n}{2}\right)^{m-1}n_{xx}-\frac{m(m-1)}{4}\left(\theta+\frac{n}{2}\right)^{m-2}n_{x}^{2}-\left(\frac{1}{2}n\right)^{p}\\
&=\frac{-1}{2t^{\frac{1}{p-1}+1}}\left(\frac{1}{p-1}\varphi\Big(\frac{|x|}{\sqrt{t}}\Big)+\frac{|x|}{2\sqrt{t}}\varphi'\Big(\frac{|x|}{\sqrt{t}}\Big)+
m\theta^{m-1}\varphi''\Big(\frac{|x|}{\sqrt{t}}\Big)+\varphi^{p}\Big(\frac{|x|}{\sqrt{t}}\Big)\right)\\
&\ \ \ \ +\frac{1}{2t^{\frac{1}{p-1}+1}}\left(m\theta^{m-1}\varphi''\Big(\frac{|x|}{\sqrt{t}}\Big)-m\Big(\theta+\frac{n}{2}\Big)^{m-1}\varphi''\Big(\frac{|x|}{\sqrt{t}}\Big)
\right)\\
&\ \ \ \ +\frac{1}{2t^{\frac{1}{p-1}+1}}\left(\varphi^{p}\Big(\frac{|x|}{\sqrt{t}}\Big)-(\frac{1}{2})^{p-1}\varphi^{p}\Big(\frac{|x|}{\sqrt{t}}\Big)\right)\\
&\ \ \ \ -\frac{1}{2t^{\frac{1}{p-1}+1}}\frac{m(m-1)}{2}\Big(\theta+\frac{n}{2}\Big)^{m-2}\frac{1}{t^{\frac{1}{p-1}}}\left(\varphi'\Big(\frac{|x|}{\sqrt{t}}\Big)\right)^{2}\\
& =\frac{1}{2t^{\frac{1}{p-1}+1}}\left(\frac{1}{2}m(m-1)(\theta+\lambda)^{m-2}|\varphi''\Big(\frac{|x|}{\sqrt{t}}\Big)|\varphi\Big(\frac{|x|}{\sqrt{t}}\Big)\frac{1}{t^{\frac{1}{p-1}}}
\right)\\
&\ \ \ \ +\frac{1}{2t^{\frac{1}{p-1}+1}}\left(\varphi^{p}\Big(\frac{|x|}{\sqrt{t}}\Big)-\Big(\frac{1}{2}\Big)^{p-1}\varphi^{p}\Big(\frac{|x|}{\sqrt{t}}\Big)\right)\\
&\ \ \ \ -\frac{1}{2t^{\frac{1}{p-1}+1}}\frac{m(m-1)}{2}\Big(\theta+\frac{n}{2}\Big)^{m-2}\frac{1}{t^{\frac{1}{p-1}}}\left(\varphi'\Big(\frac{|x|}{\sqrt{t}}\Big)\right)^{2},
\end{align*}
where $\lambda\in (0,n/2)$.
%Using Mean Value Theorem, we derive
%$$\Big(m(\theta+\frac{n}{2})^{m-1}\varphi''(\frac{|x|}{\sqrt{t}})-m\theta^{m-1}\varphi''(\frac{|x|}{\sqrt{t}})\Big)
%\leq\frac{1}{2}m(m-1)(\theta+\sigma)^{m-2}|\varphi''(\frac{|x|}{\sqrt{t}})|\varphi(\frac{|x|}{\sqrt{t}})\frac{1}{t^{\frac{1}{p-1}}}.$$

Clearly, $(\ref{3.13*})$, $(\ref{3.14*})$ and $(\ref{3.15*})$ entail that $\varphi(y)\sim y^{-2/(p-1)}$,
$\varphi^{p}(y)\sim y^{-2/(p-1)-2}$,
 $\varphi'(y)\sim y^{-2/(p-1)-1}$ and
 $\varphi''(y)\sim y^{-2/(p-1)-2}$ as $y\rightarrow\infty$,
this guarantees that for sufficiently large $t$,
\begin{equation*}
\overline{w}_{t}-(\overline{w}^{m})_{xx}-f(\overline{w})=\frac{1}{2t^{\frac{1}{p-1}+1}}\left(\varphi^{p}\Big(\frac{|x|}{\sqrt{t}}\Big)-\Big(\frac{1}{2}\Big)^{p-1}\varphi^{p}\Big(\frac{|x|}{\sqrt{t}}\Big)\right)+o\left(\frac{1}{t^{\frac{1}{p-1}+1}}\right)>0
\end{equation*}
uniformly in $x$, which implies when $t$ is sufficiently large, $\overline{w}(x,t)$ is an upper solution of $(\ref{1.1})$.\\
\indent
Define
$$\underline{w}(x,t)=\theta+\frac{k}{(at+x^{2})^{1/(p-1)}},$$
direct computation yields that
\begin{equation*}
\begin{split}
\underline{w}_{t}-(\underline{w}^{m})_{xx}-f(\underline{w})&\leq\underline{w}_{t}-(\underline{w}^{m})_{xx}\\
&=\frac{-1}{p-1}\frac{k}{(at+x^{2})^{1/(p-1)+1}}(a-2m\underline{w}^{m-1})\\
&\ \ \ \ -m\underline{w}^{m-1}\frac{p}{(p-1)^{2}}\frac{4kx^{2}}{(at+x^{2})^{1/(p-1)+2}}-m(m-1)\underline{w}^{m-2}\underline{w}_{x}^{2},
\end{split}
\end{equation*}
if $a-2m\underline{w}^{m-1}>0$, we can immediately infer that $\underline{w}(x,t)$ is a lower solution of $(\ref{1.1})$. Moreover, for sufficiently small $k$, we have that $\underline{w}\leq\overline{w}$. Consequently, there exist $a>0$, $k>0$ and $T_{1}>0$ such that for $t>T_{1}$,  $\overline{w}(x,t+T_{1})$ and $\underline{w}(x,t+T_{1})$ are the upper and lower solutions of $(\ref{1.1})$, respectively. Thus there exists a symmetrically decreasing solution $w(x,t)$ of $(\ref{1.1})$ such that  $\underline{w}(x,t+T_{1})\leq w(x,t)\leq\overline{w}(x,t+T_{1})$ for $x\in \mathbb{R}$, $t>0$ and $w(\cdot,t)\rightarrow\theta$ as $t\rightarrow\infty$ locally uniformly in $C^{1}(\mathbb{R})$.\\
\indent
Note that $u(0,t)>\theta$ for all $t\geq0$. Thus there exists $T>0$ such that
$$u(0,0)>w(0,T).$$
Since $w(\cdot,t)\rightarrow\theta$ as $t\rightarrow\infty$ locally uniformly in $C^{1}(\mathbb{R})$, we can assume that $u(x,0)$ and $w(x,T)$ have exact one intersection point $y\in(0,r(t))$, which is non-degenerate.\\
\indent
We write $$v_{1}(x,t):=\frac{m}{m-1}u^{m-1}(x,t),\quad \quad v_{2}(x,t):=\frac{m}{m-1}w^{m-1}(x,t+T),$$
which evidently  satisfy the linear equation
$$v_{t}=(m-1)vv_{xx}+v_{x}^{2}+g(v),$$
where
$$g(v):=m\left(\frac{m-1}{m}v\right)^{\frac{m-2}{m-1}}f\left(\Big(\frac{m-1}{m}v\Big)^{\frac{1}{m-1}}\right).$$
We introduce
$$\eta(x,t)=v_{1}(x,t)-v_{2}(x,t)\ \ \text{for}\ \ t\geq0,\ x\in[-r(t),r(t)].$$
Then $\eta(x,t)$ satisfies
\begin{equation}
\eta_{t}=a(x,t)\eta_{xx}+b(x,t)\eta_{x}+c(x,t)\eta,\label{3.12}
\end{equation}
with
$$a(x,t):=(m-1)v_{1}(x,t),\quad b(x,t):=v_{1x}(x,t)+v_{2x}(x,t),$$
and
$$c(x,t):=(m-1)v_{2xx}(x,t)+\frac{g(v_{1}(x,t))-g(v_{2}(x,t))}{v_{1}(x,t)-v_{2}(x,t)}.$$
We claim that
$$\eta(0,t)>0\ \ \text{for all}\ \ t>0.$$
Otherwise there exists $t_{1}>0$ such that
$$\eta(0,t)>0\ \ \text{for}\ \ t\in[0,t_{1})\ \text{and}\ \eta(0,t_{1})=0.$$
Since $\eta(x,t)$ is an even function and only has two zeros $y(t)$ and $-y(t)$, by Zero number diminishing properties \cite[Lemma 3.2]{lou2024convergence}, we deduce that $y(t_{1})=0$, which implies
$$\eta(x,t)<0\ \text{for}\ t=t_{1},\ x\in [-r(t),r(t)]/\{0\}.$$
Then by the strong maximum principle,
$$\eta(x,t)<0\ \text{for}\ t>t_{1},\ x\in [-r(t),r(t)],$$
that is
$$u(x,t)<w(x,t+T)\ \ \text{for}\ t>t_{1},\ x\in[-r(t),r(t)].$$
Fix $t_{2}>t_{1}$, we can choose $\epsilon_{0}>0$ small enough such that
$$(1+\epsilon_{0})u(x,t_{2})\leq w(x,t_{2}+T)\ \ \text{for}\ x\in[-r(t_{2}),r(t_{2})].$$
By the comparison principle we have
$$u(x,t;(1+\epsilon_{0})u(x,t_{2}))\leq w(x,t+t_{2}+T)\ \ \text{for}\ \ t>0.$$
However, Theorem \ref{A} implies
$$u(x,t;(1+\epsilon_{0})u(x,t_{2}))\rightarrow1,\ \text{as}\ \ t\rightarrow\infty.$$
This contradicts the fact that $w(\cdot,t)\rightarrow\theta$ locally uniformly in $\mathbb{R}$ as $t\rightarrow\infty$. The claim is proved.\\
\indent
 As a result, we derive for all $t>0$
$$u(0,t)>w(0,t+T)>\underline{w}(0,t+T)=\theta+\frac{k}{(at+aT)^{1/(p-1)}}>\theta+C(t+T)^{-\frac{1}{p-1}}$$
with $C=k/2a^{1/(p-1)}$, we obtain the desired assertion. \hfill$\Box$

\indent
Now, we are able to estimat the lower bound of $r(t)$ for $p>3$.
\begin{yl}
Assume that $p>3$ and let $u(x,t)$ be the solution of (\ref{1.1}) with the initial datum $\widetilde{u}_{0}\in \mathfrak{X}$. Then there exists a constant $C_1>0$ such that
$$r(t)\geq 2y_{0}\sqrt{t}+C_1t^{\frac{1}{2}-\frac{1}{p-1}}\ \ \text{as} \ \ t\rightarrow\infty.$$
where $y_{0}>0$ is the zero of $\xi(y)$.
\end{yl}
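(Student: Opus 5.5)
We will build a subsolution out of the self-similar solution $e$ of Lemma \ref{yl2.1*}, boosted by the lower bound for $u(0,t)$ in Lemma \ref{yl3.4}. That lemma gives $u(0,t)\ge\theta+C(t+T)^{-1/(p-1)}$ for $t\ge T$. By the monotonicity in Lemma \ref{yl2.3}, $u(x,t)\ge\theta$ on $|x|\le\theta(t)$, so the region near the origin keeps feeding mass into the outer zone where $f=0$. Two routes suggest themselves.

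The first route is to bound $\theta(t)$ from below, say $\theta(t)\ge c\,t^{\frac12-\frac1{p-1}}$, and then compare on $x\ge\theta(t)$ with a shifted copy of $e$, mirroring Lemma \ref{yl3.3}. To get this bound on $\theta(t)$, we would compare $u$ on the set where $u>\theta$ with the solution $Q_b$ of (\ref{3.1}), taking $b\approx C t^{-1/(p-1)}$ from Lemma \ref{yl3.4}. Since $l(b)\sim b^{-(p-1)/2}\sim t^{1/2}$, this comparison loses too much, so the first route is not sharp.

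We therefore take the second route and work directly in the pressure or $u$ variable on $x\ge0$. Define the candidate subsolution
$$\underline u(x,t)=\big(1+\varepsilon(t)\big)^{1/m}\,e\!\left(\frac{x-h(t)}{1+\varepsilon(t)}\cdot\text{(scaling)},t\right),$$
built from the self-similar profile $\xi$ of (\ref{2.2}) with boundary value $\theta^m(1+\varepsilon(t))$, where $\varepsilon(t)=c\,t^{-1/(p-1)}$. Scaling $\xi$ gives a boundary-value-$\theta+\delta$ solution of (\ref{2.3}) whose front is at $2y_\delta\sqrt t$ with $y_\delta\ge y_0+c'\delta$. The key computation is that a time-dependent $\delta(t)=c\,t^{-1/(p-1)}$ produces an extra error term of size $\delta'(t)\sim t^{-1/(p-1)-1}$. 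That term has to be absorbed by a slight backward shift $x\mapsto x+h(t)$ with $h'<0$ small, or by the non-negative reaction $f\ge0$. Since $u\ge\theta$ at $x=0$ and $f=0$ below $\theta$, the comparison on $\{x>0\}$ needs only
$$u(0,t)\ge \underline u(0,t)=\theta+\tfrac{C}{2}(t+T)^{-1/(p-1)},$$
together with ordering at an initial time $T'$. At $T'$ we arrange $\underline u(\cdot,T')\le u(\cdot,T')$ by shifting $\underline u$ to the left by a constant so that its support lies inside $[0,r(T')]$; this is possible because $r(T')>0$ and $u(\cdot,T')$ is positive there. The comparison principle from \cite{aronson1982stabilization} then gives
$$r(t)\ge 2y_0\sqrt t+2c'\delta(t)\sqrt t-C_0=2y_0\sqrt t+C_1t^{\frac12-\frac1{p-1}}$$
for large $t$. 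Here $t^{\frac12-\frac1{p-1}}\to\infty$ precisely because $p>3$, and this is what lets it dominate the constant loss $C_0$.

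The main obstacle is verifying the subsolution inequality $\underline u_t\le(\underline u^m)_{xx}$ in the presence of the slowly varying boundary value $\theta(1+\varepsilon(t))$. Writing $\underline u=\xi_{\delta(t)}^{1/m}(x/2\sqrt t)$, the residual is $\partial_\delta\xi^{1/m}_\delta\cdot\delta'(t)$. Because $\delta'<0$ and $\partial_\delta\xi_\delta>0$, this residual has the favorable sign, which is the key check. We also need the derivative $\frac{d}{d\delta}y_\delta>0$ at $\delta=0$, which we would obtain by linearizing (\ref{2.2}) in its initial data using its scaling structure.
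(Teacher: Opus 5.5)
Your proposal is correct and is essentially the paper's argument. Both take the self-similar solution $e$ of Lemma \ref{yl2.1*}, raise its boundary value by an amount $\sim t^{-1/(p-1)}$ that Lemma \ref{yl3.4} allows, compare with $u$ on $\{x\ge 0\}$, and read the gain $t^{\frac12-\frac1{p-1}}$ off the enlarged front; your constant shift loss $C_0$ is harmless precisely because $p>3$. Your version $\underline u=\lambda(t)\,e\big(x,\lambda(t)^{m-1}t\big)$ with $\lambda=1+\delta/\theta$ is the special case $\beta=\lambda^{\frac{m-1}{2}}\sqrt t$ of the paper's ansatz $(1+\alpha)\,\xi^{1/m}(x/2\beta)$. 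It makes the paper's condition $(1+\alpha)^{m-1}>2\beta\beta'$ automatic, and it makes your key check explicit: $\partial_\lambda U=\xi^{1/m}(z)-\frac{m-1}{2}z(\xi^{1/m})'(z)\ge 0$ with $z=y\lambda^{-\frac{m-1}{2}}$. For the ordering at $T'$, support containment in $[0,r(T')]$ is not enough; shift far enough that the part of $\underline u(\cdot,T')$ in $x\ge 0$ is small and sits where $u(\cdot,T')>\theta$.
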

\noindent\textbf{Proof.}  We write
$$\alpha(t):=\frac{a}{\theta}(T+t)^{-\frac{1}{p-1}},$$
$$\beta(t):=(T_{1}+t)^{\frac{1}{2}}(1+k(T_{1}+t)^{-\frac{1}{p-1}}),$$
%$$\beta(t)'=\frac{1}{2}(T_{1}+t)^{-\frac{1}{2}}(1+k(T_{1}+t)^{-\frac{1}{p-1}})-\frac{1}{p-1}k(T_{1}+t)^{-\frac{1}{p-1}-\frac{1}{2}},$$
and define
$$\underline{u}(x,t):=(1+\alpha(t))e(x,\beta^{2}(t))=(1+\alpha(t))\xi^{\frac{1}{m}}\left(\frac{x}{2\beta(t)}\right),$$
where $T$ is the constant determined by Lemma \ref{yl3.4} and $a\in(0,c)$, $k$ and $T_{1}$ are constants to be chosen later.

Basic calculation yields
\begin{equation}
\begin{split}
&\ \ \ \ \underline{u}_{t}(x,t)-(\underline{u}^{m})_{xx}-f(\underline{u})\\
&=\alpha(t)'\xi^{\frac{1}{m}}-
(1+\alpha(t))(\xi^{\frac{1}{m}})'\frac{x}{2\beta^{2}(t)}\beta'(t)-(1+\alpha(t))^{m}\xi''\frac{1}{4\beta^{2}(t)}-f(\underline{u})\\
&\leq -(1+\alpha(t))(\xi^{\frac{1}{m}})'\frac{x}{2\beta^{2}(t)}\beta'(t)+(1+\alpha(t))^{m}\frac{2x}{2\beta(t)}(\xi^{\frac{1}{m}})'\frac{1}{4\beta^{2}(t)}\\
&=(\xi^{\frac{1}{m}})'\frac{x}{2\beta^{2}(t)}(1+\alpha(t))\left((1+\alpha(t))^{m-1}\frac{1}{2\beta(t)}-\beta'(t)\right).
\end{split}
\end{equation}
Since $\xi'(x)<0$ and $\alpha(t),\ \beta(t)>0$, if $(1+\alpha(t))^{m-1}-2\beta'(t)\beta(t)>0$, we have
$$\underline{u}_{t}(x,t)-(\underline{u}^{m})_{xx}-f(\underline{u})\leq0.$$
In addition,
\begin{equation}
\begin{split}
&(1+\alpha(t))^{m-1}-2\beta'(t)\beta(t)\\
=&(1+\alpha(t))^{m-1}-2\left(1+k(T_{1}+t)^{-\frac{1}{p-1}}\right)\left[\frac{1}{2}(1+k(T_{1}+t)^{-\frac{1}{p-1}})-\frac{1}{p-1}k(T_{1}+t)^{-\frac{1}{p-1}}\right]\\
=&\left(1+\frac{a}{\theta}(Tt)^{-\frac{1}{p-1}}\right)^{m-1}-\left(1+k(T_{1}+t)^{-\frac{1}{p-1}}\right)\left[1+k\left(1-\frac{2}{p-1}\right)(T_{1}+t)^{-\frac{1}{p-1}}\right],\\
\end{split}
\end{equation}
for fixed $T,\ T_{1}$, choosing $k$ sufficiently small, we have $(1+\alpha(t))^{m-1}-2\beta'(t)\beta(t)>0$ for all $t>0$.\\
 \indent
 Fixing $T$, by Lemma \ref{yl3.4}, letting $a$ and $T_{1}$ $\rightarrow0$, we see $\underline{u}(x,0)$ approaches the line $x=0$ and $\underline{u}(x,0)<\theta+a$, thus for the sufficiently small $a$ and $T_{1}$, $\underline{u}(x,0)<\widetilde{u}_{0}(x)$. Consequently, $\underline{u}(x,t)$ is a lower solution to (\ref{1.1}), denote the right boundary of $\underline{u}(x,t)$ by $\underline{r}(t)$,
applying the comparison principle in \cite{aronson1982stabilization}, we conclude that
\begin{equation}
\begin{split}
r(t)&\geq \underline{r}(t)\\
&=2(T_{2}+t)^{\frac{1}{2}}\left(1+k(T_{1}+t)^{-\frac{1}{p-1}}\right)y_{0}\\
&=2y_{0}\sqrt{t}+C_1t^{\frac{1}{2}-\frac{1}{p-1}}+o(1)\ \ \ \text{as}\ \ t\rightarrow\infty.
\end{split}\label{3.15}
\end{equation}
This completes the lemma. \hfill$\Box$
\subsection{Lower bound estimate of $r(t)$ for $1<p\leq3$}
$\quad$
This subsection is devoted to establishing the lower bound of $r(t)$ for $1<p\leq 3$.
%Inspired by (\ref{3.15}), for $1<p\leq3$, it seems that $2y_{0}\sqrt{t}+k$ is the lower bound of $r(t)$.

\begin{yl}
Assume that $1<p\leq3$ and let $u(x,t)$ be the solution of $(\ref{1.1})$ with the initial datum $\widetilde{u}_{0}\in\mathfrak{X}$, Then we have
$$r(t)\geq 2y_{0}\sqrt{t}\ \ \text{for all} \ \ t>0,$$
where $y_{0}>0$ is the zero of $\xi(y)$.\label{yl3.6}
\end{yl}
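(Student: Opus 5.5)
We will compare $u$ with the self-similar solution $e(x,t)=\xi^{1/m}(x/(2\sqrt t))$ of Lemma \ref{yl2.1*}. Our candidate sub-solution is
$$\underline{u}(x,t):=\xi^{\frac1m}\Big(\frac{x}{2\sqrt{t+\tau}}\Big),\qquad 0\le x\le 2y_0\sqrt{t+\tau},$$
with $\underline u=0$ beyond its free boundary, for a small shift $\tau\ge 0$. The goal is the statement as written, $r(t)\ge 2y_0\sqrt t$ for all $t>0$. For the main Theorem, a bound $2y_0\sqrt t-h$ would already suffice.

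\textbf{Step 1: $\underline u$ is a sub-solution in the interior.} In $\{0<x<2y_0\sqrt{t+\tau}\}$ the function $\underline u$ solves $\underline u_t=(\underline u^m)_{xx}$ exactly, because $\xi$ solves (\ref{2.2}). Since $f\ge 0$ on $[0,\theta]$ and $0\le\underline u\le\theta$, we get $\underline u_t-(\underline u^m)_{xx}-f(\underline u)=-f(\underline u)\le 0$. At the free boundary, $\underline u$ satisfies Darcy's law by Lemma \ref{yl2.1*}. So $\underline u$ is a weak sub-solution of the porous medium part on $\{x>0\}$, in the sense of the very weak sub-solution definition, and it is admissible in the comparison principle of \cite{aronson1982stabilization}.

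\textbf{Step 2: the parabolic boundary.} On the left boundary $x=0$ we need $u(0,t)\ge \underline u(0,t)=\theta$ for all $t>0$. This is exactly Lemma \ref{yl2.3}, which gives $u(0,t;\widetilde u_0)>\theta$. We work on the quarter plane $x\ge0$, using the symmetry of $u$; the case of $-l(t)$ follows by symmetry.

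\textbf{Step 3: initial ordering.} This is the main obstacle. At $t=0$ with $\tau=0$, the profile $\underline u(\cdot,0)$ degenerates to the point $x=0$, with $\underline u(0,0)=\theta$. Since $u(0,0)=\widetilde u_0(0)>\theta$ by Lemma \ref{yl2.3} and continuity, and $\widetilde u_0$ is positive near $0$, one expects $\widetilde u_0\ge \underline u(\cdot,0)$ in the limiting sense. I would make this rigorous by comparing on $[\epsilon,\infty)$. For small $\tau>0$, the support of $\underline u(\cdot,0)$ is $[0,2y_0\sqrt\tau]$, and $\underline u(\cdot,0)\le\theta<\widetilde u_0$ there once $\tau$ is small, using $\widetilde u_0(0)>\theta$ and the continuity of $\widetilde u_0$. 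Comparison then gives $r(t)\ge 2y_0\sqrt{t+\tau}\ge 2y_0\sqrt t$. Here we need $\widetilde u_0(0)>\theta$. If only $u(0,t)>\theta$ for $t>0$ is available, I would start at a small time $t_0$ and shift time backwards, using that $r$ is nondecreasing.

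A fallback if exact ordering at time $0$ is delicate is to compare from some time $t_0$. One chooses $\tau$ with $2y_0\sqrt{t_0+\tau}\le r(t_0)$ and $\underline u(\cdot,t_0)\le u(\cdot,t_0)$; the latter is possible since $u(x,t_0)>\theta$ near $0$ while $\underline u\le\theta$. This yields $r(t)\ge 2y_0\sqrt{t+\tau}$ for $t\ge t_0$, and monotonicity of $r$ handles small $t$ up to a constant, which is enough for part (ii) of Theorem \ref{dl1.1}.
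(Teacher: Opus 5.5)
Your main argument is the paper's proof. The paper compares $u(x,t)$ with the time-shifted self-similar solution $e(x,t+\epsilon)$ (your $\underline u$ with $\tau=\epsilon$), uses $f=0$ on $[0,\theta]$ for the interior inequality, and uses $u(0,0)>\theta$ plus continuity for the initial ordering to get $r(t)\ge 2y_0\sqrt{t+\epsilon}$. Your explicit use of Lemma~\ref{yl2.3} to control the boundary $x=0$ of the half-line is a step the paper leaves implicit, and it is genuinely needed: the even extension of $e$ has a corner at $x=0$ (since $\xi'(0)<0$), so it is not a whole-line sub-solution.

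The strict inequality $\widetilde u_0(0)>\theta$ that you were unsure about does hold, so your fallback is unnecessary. If $\widetilde u_0(0)\le\theta$, then $\widetilde u_0\le\theta$ everywhere, and comparison with the constant solution $\theta$ gives $u\le\theta$. Then $u$ solves the pure porous medium equation and vanishes, contradicting transition.
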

\noindent\textbf{Proof.} We construct a lower solution in the following manner:
\begin{equation}
\underline{u}(x,t):=\left\{
\begin{split}
&e(x,t)=\xi^{\frac{1}{m}}\left(\frac{x}{2\sqrt{t}}\right),\quad 0\leq x\leq 2y_{0}\sqrt{t},\\
&0,\qquad\qquad\qquad\quad\qquad \ \, x\geq 2y_{0}\sqrt{t}.
\end{split}
\right.
\end{equation}
 For sufficiently small $\epsilon>0$ and $0\leq x\leq 2y_{0}\sqrt{\epsilon}$,
 $$\underline{u}(x,\epsilon)\leq \underline{u}(0,\epsilon)=\theta<u(0,0).$$
By the continuity of $u(x,0)$, we can find a small $\delta>0$ such that
$$u(x,0)>\theta\ \ \text{for}\ \ x\in[0,\delta).$$
Then if $2y_{0}\sqrt{\epsilon}<\delta$, we have $\underline{u}(x,\epsilon)<u(x,0)$ and
\begin{equation*}
\underline{u}_{t}-(\underline{u}^{m})_{xx}=e_{t}(x,t)-(e^{m})_{xx}(x,t)=0\leq f(\underline{u}).
\end{equation*}
Consequently, $\underline{u}(x,t)$ is a lower solution of (\ref{1.1}), by the comparison principle in \cite{aronson1982stabilization}, we derive
$$\underline{r}(t+\epsilon)=2y_{0}\sqrt{t+\epsilon}\leq r(t)\ \ \text{for all}\ \ t>0.$$
In view of the arbitrariness of $\epsilon$, it then follows that
$$2y_{0}\sqrt{t}\leq r(t)\ \ \text{for all}\ \ t>0.$$
The lemma is complete. \hfill$\Box$

Combining Lemma \ref{yl2.1} and Lemmas \ref{yl3.3}-\ref{yl3.6}, we obtain Theorem \ref{dl1.1}.

\end{document}